\documentclass[12pt]{amsart}
\usepackage{amsmath,amsfonts,graphics,graphicx}

\newcommand\zo{{$(0,1)$\ }}
\newcommand\pq{{\it{P}-\it{Q}}}
\newcommand\ppinv{$P\mbox{-}P^{-1}$}
\newcommand\imag{{\iota}}
\DeclareMathOperator{\SDA}{SDA}
\newcommand\Adam{{\'Ad\'am}}
\newcommand\Z{{\mathbb Z}}
\newcommand\Q{{\mathbb Q}}
\newcommand\C{{\mathbb C}}
\newcommand\settA{{\mathfrak{A}}}
\newcommand\settB{{\mathfrak{B}}}
\newcommand\setA{{\mathcal{A}}}
\newcommand\setB{{\mathcal{B}}}
\newcommand\tth{^{\operatorname{th}}}

\newtheorem{lemma}{Lemma}[section]
\newtheorem{theorem}[lemma]{Theorem}
\newtheorem{corollary}[lemma]{Corollary}
\newtheorem{proposition}[lemma]{Proposition}

\theoremstyle{definition}
\newtheorem{definition}[lemma]{Definition}

\theoremstyle{remark}
\newtheorem*{remark}{Remark}

\begin{document}

\title[Equivalence of Sparse Circulants]{Equivalence of Sparse Circulants:
 the Bipartite \'Ad\'am Problem}

\author{Doug Wiedemann}
\address{
  Center for Communications Research,
  805 Bunn Drive,
  Princeton, NJ 08540
}
\email{doug@idaccr.org}

\author{Michael~E. Zieve}
\address{
  Hill Center,
  Department of Mathematics,
  Rutgers University,
  110 Frelinghuysen Road,
  Piscataway NJ 08854
}
\email{zieve@math.rutgers.edu}

\begin{abstract}
We consider $n$-by-$n$ circulant matrices having entries $0$ and $1$.
Such matrices can be identified with sets of residues mod~$n$, corresponding
to the columns in which the top row contains an entry $1$.  Let $A$ and $B$
be two such matrices, and suppose that the corresponding residue sets $S_A,S_B$
have size at most $3$.  We prove that the following are equivalent:
(1) there are integers $u,v$ mod~$n$, with $u$ a unit, such that $S_A=uS_B+v$;
(2) there are permutation matrices $P,Q$ such that $A=PBQ$.  Our proof relies
on some new results about vanishing sums of roots of unity.  We give
examples showing this result is not always true for denser circulants, as
well as results showing it continues to hold in some situations.  We also
explain how our problem relates to the \'Ad\'am problem on isomorphisms
of circulant directed graphs.
\end{abstract}

\date{\today}

\maketitle

\section{Introduction}

We define a \emph{circulant} to be any square matrix whose
rows are consecutive right circular shifts of each other.
In other words, it is any $n$-by-$n$ matrix $(a_{i,j})$ where
$a_{i,j}$ depends only on $i-j$ mod $n$.
Thus, a circulant is a special type of Toeplitz matrix.
Circulant matrices occur in numerous applications and have been
studied extensively; for instance, see \cite{Davis}.

Surprisingly, for many applications the interest in circulants does
not directly stem from the circular symmetry just described.
For example, every Desarguesian finite projective plane can be represented
as a circulant, by a theorem of Singer.
In other words, if $m$ is a prime power, there is an $n$-by-$n$ circulant
matrix
(where $n=m^2+m+1$) in which each row has $m+1$ entries being $1$ and the
rest being $0$,
with the further condition that the componentwise product of any two rows has
exactly one $1$.
Here the rows represent lines and the columns points, where a line contains
a point whenever the corresponding entry in the circulant is a $1$.

In this paper we are primarily interested in circulants with entries $0$ and
$1$, which we call \zo circulants.  One can think of a \zo circulant as
an incidence structure, or as the nonzero block of the
adjacency matrix of a bipartite graph.
Many interesting examples of incidence structures correspond to
\zo circulants; for instance, these include the much-studied subject of
``cyclic difference sets''.

The \emph{weight} of a \zo circulant is
the number of $1$'s in each row.
Often the $n$-by-$n$ circulants of interest have weight quite small compared
to $n$.  In this paper we prove that,
for circulants of weight at most $3$, various equivalence relations are
the same.  We need some notation to state our result.  For any $n$-by-$n$
\zo circulant $A$, let $S_A$ be the set of integers mod~$n$ corresponding
to the columns in which the top row of $A$ contains an entry $1$; here the
leftmost column is labeled $0$, the next is $1$, and so on.  Also, $A^T$
denotes the transpose of the matrix $A$.

\begin{theorem}
\label{thm intro}
Let $A$ and $B$ be two $n$-by-$n$ \zo circulants of weight at most $3$.  Then the
following are equivalent:
\begin{enumerate}
\item There exist $u,v\in\Z/n\Z$ such that $\gcd(u,n)=1$ and $S_A=uS_B+v$.
\item There are $n$-by-$n$ permutation matrices $P,Q$ such that $A=PBQ$.
\item There is an $n$-by-$n$ permutation matrix $P$ such that
$AA^T=PBB^TP^{-1}$.
\item The complex matrices $AA^T$ and $BB^T$ are similar.
\end{enumerate}
\end{theorem}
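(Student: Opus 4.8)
The plan is to prove the cycle $(1)\Rightarrow(2)\Rightarrow(3)\Rightarrow(4)\Rightarrow(1)$, so that everything reduces to three easy implications and the single hard one $(4)\Rightarrow(1)$. Let $\Pi$ be the $n$-by-$n$ cyclic shift matrix, so that every \zo circulant is a $0/1$-polynomial $\sum_{j\in J}\Pi^{j}$ whose exponent set $J$ agrees with its row set up to a sign, and for a unit $u$ mod $n$ let $D_u$ be the permutation matrix of the map $x\mapsto ux$ on $\Z/n\Z$. Since $D_u\Pi D_u^{-1}=\Pi^{u}$, conjugation by $D_u$ scales the exponent set by $u$, while left multiplication by $\Pi^{v}$ translates it by $v$; thus $S_A=uS_B+v$ yields $A=\Pi^{v'}D_uBD_u^{-1}=PBQ$ with $P,Q$ permutation matrices, proving $(1)\Rightarrow(2)$. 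From $A=PBQ$ one gets $AA^{T}=PBQQ^{T}B^{T}P^{T}=PBB^{T}P^{-1}$, since $Q$ is orthogonal and $P^{T}=P^{-1}$, proving $(2)\Rightarrow(3)$; and $(3)\Rightarrow(4)$ is immediate, as a permutation matrix is invertible.

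For $(4)\Rightarrow(1)$ I work in the Fourier picture. Each of $AA^{T}$ and $BB^{T}$ is a real symmetric (positive semidefinite) circulant, hence diagonalized by the discrete Fourier transform, with eigenvalues $\lambda_k=|a(\omega^{k})|^{2}$, where $\omega=\exp(2\pi\imag/n)$ and $a(x)=\sum_{s\in S_A}x^{s}$; equivalently, the first row of $AA^{T}$ is the difference multiset $\Delta_A$ of $S_A$, where $\Delta_A(t)=\#\{(x,y)\in S_A\times S_A:x-y\equiv t\bmod n\}$. Being diagonalizable, the two matrices are similar iff their eigenvalue multisets coincide, so $(4)$ says exactly that the difference circulants of $S_A$ and $S_B$ are cospectral. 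The largest eigenvalue is $\lambda_0=|S_A|^{2}$, so the common weight $w=|S_A|=|S_B|$ is recovered; the cases $w\le1$ are trivial, and when $w=2$ one has $\lambda_k=2+2\cos(2\pi kd/n)$ with $d$ the difference of the two elements, so the multiplicity of the eigenvalue $4$ recovers $\gcd(d,n)$, which is therefore the same for $A$ and $B$, and the case is finished by lifting a unit mod $n/\gcd(d,n)$ to a unit mod $n$.

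The substance is the case $w=3$. After translating, write $S_A=\{0,p,q\}$ and $S_B=\{0,r,s\}$, so that $\lambda_k^{A}=3+2\bigl(\cos\tfrac{2\pi kp}{n}+\cos\tfrac{2\pi kq}{n}+\cos\tfrac{2\pi k(q-p)}{n}\bigr)$, a quantity that depends only on, and is symmetric in, the unordered ``edge triple'' $\{p,\ q-p,\ -q\}$ of differences of $S_A$, whose three entries sum to $0$. First I reduce to the primitive case $\gcd(p,q,n)=1$: in general $\{\lambda_k^{A}\}_k$ consists of $\gcd(p,q,n)$ copies of the multiset obtained after dividing $p,q,n$ by that gcd, and the gcd equals the multiplicity of the top eigenvalue $9$, which in the primitive case is attained only at $k=0$. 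The goal becomes: cospectral difference circulants of primitive $3$-sets have difference multisets related by a map $x\mapsto ux$ with $u$ a unit. Granting this, $u^{-1}S_A$ and $S_B$ have equal difference multisets, and an elementary ``reconstruct a triangle from its three edge vectors'' argument shows that two $3$-sets with the same difference multiset differ by a map $x\mapsto\pm x+v$; composing the two maps gives $(1)$.

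The heart of the matter, and where the new results on vanishing sums of roots of unity come in, is the claim that cospectrality forces the two edge triples to agree up to a unit. To prove it one must control every coincidence and every distinguished value of the numbers $|1+\omega^{kp}+\omega^{kq}|^{2}$: an equation $\lambda_k^{A}=\lambda_{k'}^{B}$, or the matching of an eigenvalue multiplicity, unwinds into the vanishing of a sum of at most twelve roots of unity, and --- because the number of terms can have several prime factors --- such a sum need not decompose into rotated copies of the relation $1+\zeta_\ell+\dots+\zeta_\ell^{\ell-1}=0$, so the classical Lam--Leung-type description does not suffice and the new structural input is needed to list the possibilities. I expect the two main obstacles to be (i) classifying the relevant vanishing sums sharply enough, and (ii) the bookkeeping that turns ``for every $k$ some relation holds'' into ``the two edge triples coincide up to a unit,'' with care about multiplicities and about the degenerate $3$-sets --- three-term arithmetic progressions, and, when $3\mid n$, the coset of the order-$3$ subgroup, for which $\lambda_k=0$ can occur --- as well as finitely many small moduli $n$ that may require ad hoc treatment.
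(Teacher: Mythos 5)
Your framework is correct and runs parallel to the paper's: the easy implications $(1)\Rightarrow(2)\Rightarrow(3)\Rightarrow(4)$ are the paper's Propositions~\ref{prop affpq} and \ref{prop pqspec}; the Fourier diagonalization and the ``difference-multiset'' reading of the eigenvalues is equation~(\ref{eigen}); the weight $\le 2$ case via multiplicity of the top eigenvalue is Theorem~\ref{thm 2} together with Proposition~\ref{lcm_prop}; the normalization to the primitive case is Lemma~\ref{lcm trick}; and your ``reconstruct a triangle from its edge vectors'' argument, with multiplier $\pm1$, is exactly Proposition~\ref{prop SDA3}. So all the reductions you carry out are sound and match the paper.

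However, your proposal halts precisely where the theorem's content begins. The assertion that cospectrality of the difference circulants forces the two six-element multisets of roots of unity $\{\omega^{p},\omega^{q-p},\omega^{-q}\}\cup\text{conj.}$ and $\{\omega^{r},\omega^{s-r},\omega^{-s}\}\cup\text{conj.}$ to agree up to a $u$-th power for a unit $u$ is Theorem~\ref{k=3}, and the paper devotes two full sections to it: a strengthening of Mann's theorem on vanishing sums of roots of unity (Lemma~\ref{mann}) and its corollary on minimal vanishing sums of weight $<6$ (Corollary~\ref{small relations}); a {\sc MAGMA} check of the finitely many $n$ dividing $840$, $132$, or $90$; and a lengthy case analysis of the possible minimal vanishing subsums of the twelve-term sum $\sum_j(\gamma_j+\bar\gamma_j-\delta_j-\bar\delta_j)$ with $\prod\gamma_j=\prod\delta_j=1$. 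You name the right obstacles --- classifying mixed-cyclotomy vanishing sums and the ensuing bookkeeping --- but do not overcome them, and there is no shortcut around this: it is the substance of the paper. As written, your argument proves $(1)\Leftrightarrow(2)\Leftrightarrow(3)$, $(3)\Rightarrow(4)$, and $(4)\Rightarrow(1)$ for weight $\le 2$, but leaves the weight-$3$ case of $(4)\Rightarrow(1)$ --- the theorem's raison d'\^etre --- unproved.
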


It is not difficult to prove that $(1)\Rightarrow(2)\Rightarrow(3)
\Rightarrow(4)$.
The bulk of our effort in proving Theorem~\ref{thm intro} is devoted to
proving $(4)\Rightarrow(1)$ in the case of weight $3$ (smaller weights are
easier to handle).  We give counterexamples to this result for every
weight larger than $3$.  However, for weights $4$ and $5$, the result can
be salvaged to some extent: we show that it holds as long as every prime
factor of $n$ is sufficiently large.
On the other hand, for each weight exceeding $5$, we give counterexamples
for every sufficiently large $n$.

In the context of isomorphisms of circulant graphs, many authors have
studied questions of a similar flavor as Theorem~\ref{thm intro}.
In that context, we restrict to $v=0$ in condition (1) and to $Q=P^{-1}$
in (2), getting conditions (1') and (2').  The equivalence of (1') and (2')
is known as the \Adam\ problem; it is not always true, but the combined
efforts of several authors have shown precisely when it
holds (cf.\ \cite{AP,babai,muzychuk,palfy} and the references therein).
Likewise,
the equivalence of (1') to the similarity of $A$ and $B$ is called the
spectral \Adam\ problem, and is still being studied \cite{D,ET,HC,MPS}.
The equivalence of
(1) and (2) amounts to a bipartite analogue of the \Adam\ problem.  From
the perspective of circulant matrices, condition (2) is a more natural
equivalence than (2'), and condition (2) arises in various applications.  

In the next section we prove some preliminary results about the various
equivalence relations under consideration.  Then in Section~\ref{sec 2}
we give a quick proof of Theorem~\ref{thm intro} in case the weight $k$ is
at most $2$.  The next three sections prove Theorem~\ref{thm intro} in
the much more difficult case $k=3$: in Section~\ref{sec SDA} we reduce the
problem to a question about vanishing sums of roots of unity, which we
resolve in Sections~\ref{sec roots} and \ref{sec 3}.  We discuss the cases
$k=4$ and $k=5$ in Section~\ref{sec 4}, and the case $k\ge 6$ in
Section~\ref{sec 6}.
In Section~\ref{sec adam} we explain how our problem relates to the \Adam\ 
problem.  Finally, in
Section~\ref{sec ack} we suggest some directions for future research.

%#######################################################################
%#######################################################################

\section{Equivalence Classes of Circulants}
\label{sec equiv}

In almost any application of circulants, the first row can be
replaced with any circular shift of itself.
For example, both circulants below represent the projective plane of
order 2 (also known as the Fano plane):

\[
\left(
\begin{matrix}
0 & 1 & 1 & 0 & 1 & 0 & 0 \\
0 & 0 & 1 & 1 & 0 & 1 & 0 \\
0 & 0 & 0 & 1 & 1 & 0 & 1 \\
1 & 0 & 0 & 0 & 1 & 1 & 0 \\
0 & 1 & 0 & 0 & 0 & 1 & 1 \\
1 & 0 & 1 & 0 & 0 & 0 & 1 \\
1 & 1 & 0 & 1 & 0 & 0 & 0
\end{matrix}
\right) \quad\text{and}\quad 
\left(
\begin{matrix}
1 & 1 & 0 & 1 & 0 & 0 & 0 \\
0 & 1 & 1 & 0 & 1 & 0 & 0 \\
0 & 0 & 1 & 1 & 0 & 1 & 0 \\
0 & 0 & 0 & 1 & 1 & 0 & 1 \\
1 & 0 & 0 & 0 & 1 & 1 & 0 \\
0 & 1 & 0 & 0 & 0 & 1 & 1 \\
1 & 0 & 1 & 0 & 0 & 0 & 1
\end{matrix}
\right).
\]

Applying a circular shift to every row of a circulant has the effect of
applying a circular shift to the order of the rows, which is equivalent to
applying a circular shift to the columns.
The top row of a circulant can be identified with a set of residues mod~$n$,
by listing the positions containing an entry of $1$; here the leftmost position
is labeled $0$, the next is labeled $1$, and so on.  Thus, the circulants above
are identified with the sets $\{1,2,4\}$ and $\{0,1,3\}$ mod~$7$, which we will
denote as $\{1,2,4\}_7$ and $\{0,1,3\}_7$.

We now establish some notation that will be used throughout the paper.
Let $S$ be the unit circular shift on $n$-dimensional vectors over $\C$.
Thus, if $e_i$ is a column unit vector with a $1$ in position $i$ and a $0$
elsewhere, then $S$ is defined by $S e_i = e_{i-1}$ for $i=0,1,...,n-1$, where
the indices are computed mod~$n$.  In other words, $S$ is the matrix
\[
\left(
\begin{matrix}
0 & 1 & 0 & 0 & \cdots & 0 \\
0 & 0 & 1 & 0 & \cdots & 0 \\
\vdots & \vdots &&\ddots&&\vdots \\
0&0&\cdots&0&1&0 \\
0&0&\cdots&0&0&1\\
1 & 0 & 0 &\cdots & 0 & 0
\end{matrix}
\right).
\]
Note that the transpose of $S$ equals the inverse of $S$, and also
the powers of $S$ are precisely the circular shifts by various amounts.
The circulant corresponding to the residue set $\{a_1,...,a_k\}_n$ is
\[
A = S^{a_1} + ... + S^{a_k}.
\]

For the applications we have in mind, we want to consider two circulants
equivalent if one can be obtained from the other by row and column
permutations.  This motivates the following definition:

\begin{definition} Two circulants $A$ and $B$ are said to be
\pq\ {\it equivalent}
if there exist permutation matrices $P$ and $Q$ such that $B = PAQ$.
\end{definition}

This clearly defines an equivalence relation.
However, note that for most choices of $n$-by-$n$ matrices $A,P,Q$, where $A$
is a \zo circulant and $P$ and $Q$ are permutation matrices, the matrix $PAQ$
will not be circulant.  In order that $PAQ$ be circulant, $P$ and $Q$ must be
quite special.

Applying a circular shift to a circulant has the effect of adding a constant to
its set of residues,
i.e., applying an element of the additive group of residues mod~$n$.
This operation leads to a \pq\  equivalent matrix, since, we can take $P$
to be the shift and $Q$ to be the identity.
It is also true, but less obvious, that multiplication by a unit $u$ mod $n$
produces a \pq\  equivalent circulant: here we choose $P:e_i\mapsto e_{u  i}$
and $Q:=P^{-1}$,
so if the residue set of $A$ is $\{a_1,...,a_k\}_n$ then the residue set of $PAQ$
is $\{u a_1 ,..., u  a_k \}_n$.

We restate this as the following definition and proposition.

\begin{definition} Two subsets $\setA,\setB\subseteq\Z/n\Z$ are
\emph{linearly equivalent}
if there is a unit $u$ in $\Z/n\Z$ such that $\setB=u\setA$.
The two sets are \emph{affinely equivalent} if there exist $u,v\in\Z/n\Z$,
with $u$ a unit, such that $\setB=u\setA+v$.
% $u x +v$ is in $\setB$ precisely when $x$ is in $\setA$.
\end{definition}

\begin{proposition}
\label{prop affpq}
If two subsets of $\Z/n\Z$ are affinely equivalent,
then the associated \zo  circulants are \pq\  equivalent.
\end{proposition}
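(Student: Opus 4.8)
The plan is to split an arbitrary affine equivalence into a multiplication by a unit followed by a translation, exhibit explicit permutation matrices realizing each of these two steps as a \pq\ equivalence, and then invoke transitivity of \pq\ equivalence (already observed above). Concretely, suppose $\setB = u\setA + v$ with $u$ a unit mod $n$; set $\setC := u\setA$, so that $\setC$ is obtained from $\setA$ by multiplication and $\setB = \setC + v$ is obtained from $\setC$ by translation. Throughout, write $A = S^{a_1} + \dots + S^{a_k}$ for the circulant attached to $\setA = \{a_1,\dots,a_k\}_n$, and recall that $S^a e_i = e_{i-a}$, so $S^a S^b = S^{a+b}$.

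For the multiplication step, I would define $P$ on the standard basis by $P e_i = e_{ui}$. Since $u$ is a unit, $i \mapsto ui$ is a bijection of $\Z/n\Z$, so $P$ is a permutation matrix with $P^{-1} e_i = e_{u^{-1} i}$. The one identity needing verification is $P S^a P^{-1} = S^{ua}$ for every $a$: evaluating the left side on $e_i$ gives $P S^a e_{u^{-1} i} = P e_{u^{-1} i - a} = e_{i - ua} = S^{ua} e_i$. Summing over the exponents of $A$ then yields $P A P^{-1} = S^{u a_1} + \dots + S^{u a_k}$, which is the circulant of $u\setA = \setC$; thus $(P, P^{-1})$ realizes a \pq\ equivalence between the circulants of $\setA$ and $\setC$. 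For the translation step, $S^v$ is itself a permutation matrix, and left-multiplication by $S^v$ sends the circulant of any residue set $T$ to that of $T + v$ (because $S^v S^t = S^{t+v}$); in particular it carries the circulant of $\setC$ to the circulant of $\setC + v = \setB$, so $(S^v, I)$ realizes a \pq\ equivalence between the circulants of $\setC$ and $\setB$.

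Combining the two steps, the circulant of $\setB$ equals $S^v (P A P^{-1}) = (S^v P)\, A\, P^{-1}$, and both $S^v P$ and $P^{-1}$ are permutation matrices, so $A$ and the circulant of $\setB$ are \pq\ equivalent. I expect no genuine obstacle here: the only nonformal point is the conjugation identity $P S^a P^{-1} = S^{ua}$ --- the ``less obvious'' fact noted in the text that multiplying a residue set by a unit again yields a circulant --- and once $P$ is defined on basis vectors as above, this is a one-line check. Everything else is bookkeeping with the fact, already recorded, that \pq\ equivalence is an equivalence relation.
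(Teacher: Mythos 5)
Your proof is correct and follows essentially the same route as the paper: the text preceding the proposition realizes translation by $(S^v, I)$ and multiplication by a unit by $(P, P^{-1})$ with $P\colon e_i \mapsto e_{ui}$, then composes them, exactly as you do. Your explicit verification of the conjugation identity $PS^aP^{-1}=S^{ua}$ is a detail the paper leaves to the reader, but otherwise the arguments coincide.
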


The main focus of this paper is on the converse of this result.
The following example shows that the converse is not always true.
It exhibits an explicit \pq\  equivalence between the circulants
having residue sets
\[
\{ 0,1,4,7 \}_8 \quad\text{and}\quad \{ 0,1,3,4 \}_8.\]
These sets are
affinely inequivalent since the first set has two
arithmetic progressions of length $3$ ($1,4,7$ and $7,0,1$) but the
second set has none.

\[
\begin{pmatrix}
1 & 1 & 0 & 0 & 1 & 0 & 0 & 1\\
1 & 1 & 1 & 0 & 0 & 1 & 0 & 0\\
0 & 1 & 1 & 1 & 0 & 0 & 1 & 0\\
0 & 0 & 1 & 1 & 1 & 0 & 0 & 1\\
1 & 0 & 0 & 1 & 1 & 1 & 0 & 0\\
0 & 1 & 0 & 0 & 1 & 1 & 1 & 0\\
0 & 0 & 1 & 0 & 0 & 1 & 1 & 1\\
1 & 0 & 0 & 1 & 0 & 0 & 1 & 1
\end{pmatrix}
=
\begin{pmatrix}
1 & 0 & 0 & 0 & 0 & 0 & 0 & 0\\
0 & 0 & 0 & 0 & 0 & 1 & 0 & 0\\
0 & 0 & 0 & 0 & 0 & 0 & 1 & 0\\
0 & 0 & 0 & 1 & 0 & 0 & 0 & 0\\
0 & 0 & 0 & 0 & 1 & 0 & 0 & 0\\
0 & 1 & 0 & 0 & 0 & 0 & 0 & 0\\
0 & 0 & 1 & 0 & 0 & 0 & 0 & 0\\
0 & 0 & 0 & 0 & 0 & 0 & 0 & 1
\end{pmatrix}
\cdot R
\]
where

\[
R =
\begin{pmatrix}
1 & 1 & 0 & 1 & 1 & 0 & 0 & 0\\
0 & 1 & 1 & 0 & 1 & 1 & 0 & 0\\
0 & 0 & 1 & 1 & 0 & 1 & 1 & 0\\
0 & 0 & 0 & 1 & 1 & 0 & 1 & 1\\
1 & 0 & 0 & 0 & 1 & 1 & 0 & 1\\
1 & 1 & 0 & 0 & 0 & 1 & 1 & 0\\
0 & 1 & 1 & 0 & 0 & 0 & 1 & 1\\
1 & 0 & 1 & 1 & 0 & 0 & 0 & 1
\end{pmatrix}
\cdot
\begin{pmatrix}
1 & 0 & 0 & 0 & 0 & 0 & 0 & 0\\
0 & 1 & 0 & 0 & 0 & 0 & 0 & 0\\
0 & 0 & 0 & 0 & 0 & 0 & 1 & 0\\
0 & 0 & 0 & 0 & 0 & 0 & 0 & 1\\
0 & 0 & 0 & 0 & 1 & 0 & 0 & 0\\
0 & 0 & 0 & 0 & 0 & 1 & 0 & 0\\
0 & 0 & 1 & 0 & 0 & 0 & 0 & 0\\
0 & 0 & 0 & 1 & 0 & 0 & 0 & 0
\end{pmatrix}.
\]

We now derive a crucial property of \pq\ equivalent circulants.
If $A$ and $B$ are \pq\ equivalent circulants, then
\[ BB^T = PAQQ^TA^TP^T = PAA^TP^{-1},\]
since the the transpose of a permutation matrix is its inverse.
This proves a necessary condition for \pq\ equivalence:

\begin{proposition}
\label{prop pqspec}
If the circulants $A$ and $B$ are \pq\  equivalent
then $AA^T$ and $BB^T$ are similar matrices, and in fact there is
a permutation matrix which conjugates one to the other.
\end{proposition}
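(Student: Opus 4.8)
The plan is to unwind the definitions and perform a short matrix computation. By hypothesis there exist permutation matrices $P$ and $Q$ with $B = PAQ$. The one structural fact I would invoke is that every permutation matrix is orthogonal, i.e.\ its transpose equals its inverse; this was already recorded above for the shift matrix $S$, and it holds for an arbitrary permutation matrix because its columns are a permutation of the standard basis vectors $e_0,\dots,e_{n-1}$ and hence form an orthonormal set.

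Granting this, I would first transpose the relation defining \pq\ equivalence to get $B^T = (PAQ)^T = Q^T A^T P^T$, and then multiply:
\[
BB^T = (PAQ)(Q^T A^T P^T) = P A (QQ^T) A^T P^T = P\,A A^T\,P^T,
\]
using $QQ^T = QQ^{-1} = I$. Replacing $P^T$ by $P^{-1}$ then yields $BB^T = P(AA^T)P^{-1}$, which exhibits $AA^T$ and $BB^T$ as conjugate by the permutation matrix $P$. In particular they are similar as complex matrices, so both assertions of the proposition follow at once.

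There is no serious obstacle here: the proposition is immediate once one notes that permutation matrices are orthogonal. The only point requiring a little care is the bookkeeping with transposes, namely seeing that it is $Q$ (the right factor) that gets annihilated in $BB^T$ while $P$ (the left factor) survives to do the conjugating; symmetrically, if one instead examined $B^T B$ one would obtain $B^T B = Q^{-1}(A^T A)Q$, conjugate to $A^T A$ via $Q$, but that variant is not needed for the present statement.
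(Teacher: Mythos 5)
Your proof is correct and follows exactly the same route as the paper's: transpose the identity $B=PAQ$, multiply, cancel $QQ^T=I$, and replace $P^T$ by $P^{-1}$ to exhibit $BB^T = P(AA^T)P^{-1}$. The paper's own argument is a one-line version of this same computation.
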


We call $AA^T$ the {\it autocorrelation matrix} of the circulant $A$.
In the example above, the autocorrelation matrices are actually equal,
not just similar.
Note that if $A$ is a circulant then $AA^T$ is also a circulant,
although if $A$ is a \zo  circulant then $AA^T$ might not be \zo-valued:
if $ A = S^{a_1} + ... + S^{a_k} $ then
$AA^T = \sum_{i=1}^{k} \sum_{j=1}^{k} S^{a_i-a_j}.$

In order to discuss when circulant matrices are similar, we first
compute their eigenvalues.  We do this via the well-known method for
diagonalizing circulant matrices.
Let $\imag$ be the square root of $-1$ which lies in the upper half-plane.
Let $\zeta = e^{ 2 \pi \imag / n } $ and let $V$ be the Vandermonde matrix
$( \zeta^{i j} )_{0\le i,j\le n-1}$.
It turns out that $V$ diagonalizes every $n$-by-$n$ circulant.
Let $A = \sum_{i=0}^{n-1} w_i S^i $ with $w_i\in\C$.
Then $V^{-1} A V = D $ is a diagonal matrix with
$D_{r,r} = \sum_i w_i \zeta^{i r}.$
(One way to prove this is to verify directly that $AV = VD$, and then use
 the invertibility of the Vandermonde matrix $V$.)
% That $V$ is invertible can be established using that $V$ has nonzero
% determinant, being a Vandermonde matrix, or using by direct verification
% that when $V$ is multiplied by its conjugate the result
% is $n$ times the identity matrix.
As a result, if $\{ a_1,...,a_k \}_n$ is the residue set for a \zo circulant
$A$ then the multiset of eigenvalues of $AA^T$ is
\begin{equation}\label{eigen}
\left\{ \sum_{1 \le i,j \le k} \zeta^{(a_i-a_j)r}\colon 0\le r
\le n-1\right\}.
\end{equation}

\begin{proposition}\label{lcm_prop} If $A$ is the circulant
with residue set $\{a_1,\dots,a_k\}_n$,
then the number of times that $k^2$ occurs as an eigenvalue
of $A A^T$ is equal to $\gcd(n,\{a_i-a_j:1\le i,j\le k\})$.
\end{proposition}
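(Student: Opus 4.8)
The plan is to read the multiplicity straight off the eigenvalue list in \eqref{eigen}, using the fact that $k^2$ is the largest value an eigenvalue of $AA^T$ can take and that it is attained only under a very rigid condition. First I would rewrite the eigenvalue indexed by $r$ as
\[
\lambda_r \;=\; \sum_{1\le i,j\le k}\zeta^{(a_i-a_j)r}
\;=\;\Bigl(\sum_{i=1}^k\zeta^{a_i r}\Bigr)\overline{\Bigl(\sum_{j=1}^k\zeta^{a_j r}\Bigr)}
\;=\;\Bigl|\sum_{i=1}^k\zeta^{a_i r}\Bigr|^2,
\]
using $\overline{\zeta^{a_j r}}=\zeta^{-a_j r}$. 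In particular $0\le\lambda_r\le k^2$ for every $r$, the upper bound being the triangle inequality applied to the $k$ unit complex numbers $\zeta^{a_1 r},\dots,\zeta^{a_k r}$.

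Next I would pin down exactly when $\lambda_r=k^2$. Equality in the triangle inequality for complex numbers of modulus $1$ forces them all to be equal, so $\lambda_r=k^2$ holds precisely when $\zeta^{a_1 r}=\dots=\zeta^{a_k r}$, equivalently when $\zeta^{(a_i-a_j)r}=1$ for all $i,j$, equivalently when $n\mid (a_i-a_j)r$ for all $i,j$. Hence the number of times $k^2$ occurs among the eigenvalues is the cardinality of
\[
H^{\perp}:=\{\,r\in\Z/n\Z : (a_i-a_j)r\equiv 0 \pmod n \text{ for all }i,j\,\},
\]
which is the annihilator, under the pairing $(r,h)\mapsto\zeta^{rh}$, of the subgroup $H\le\Z/n\Z$ generated by all the differences $a_i-a_j$.

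The remaining step is elementary group theory in the cyclic group $\Z/n\Z$: the subgroup generated by a finite set of elements of $\Z/n\Z$ is generated by the gcd of those elements together with $n$, so $H=\langle d\rangle$ with $d=\gcd(n,\{a_i-a_j:1\le i,j\le k\})$ and $|H|=n/d$; and $H^{\perp}$, being the set of multiples of $n/d$ in $\Z/n\Z$, has exactly $d$ elements. Putting this together, $k^2$ occurs as an eigenvalue of $AA^T$ with multiplicity exactly $\gcd(n,\{a_i-a_j\})$. I do not expect any genuine obstacle here; the only points needing a word of care are that there is no ``accidental'' way to reach the eigenvalue $k^2$ other than the triangle-inequality equality case (which is exactly what the bound $\lambda_r\le k^2$ rules out), and the bookkeeping that identifies $H$ and $H^{\perp}$ explicitly in terms of gcd's.
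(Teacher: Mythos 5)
Your proof is correct and follows essentially the same route as the paper's: observe that the eigenvalue indexed by $r$ is a sum of $k^2$ unit-modulus terms, hence equals $k^2$ iff every term is $1$, i.e. iff $n\mid (a_i-a_j)r$ for all $i,j$, and then count such $r$ via the gcd of the differences with $n$. The only cosmetic differences are that you factor the sum as $\bigl|\sum_i\zeta^{a_i r}\bigr|^2$ before invoking the triangle inequality (the paper applies it directly to the $k^2$ terms), and that the paper disposes of the degenerate case $k=0$ explicitly while you leave it implicit.
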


\begin{proof}
If $k=0$, the gcd is $n$ so the statement is true because $A=0$.
Now assume $k>0$.  As above, the eigenvalues of $A A^T$ are in bijection
with the values $r\in\Z/n\Z$, where $r$ corresponds to the sum in
(\ref{eigen}).  This sum has $k^2$ terms of unit
magnitude, so it equals $k^2$ if and only if each term is $1$.
This happens if and only if $rg \equiv 0 \pmod n$, where
$g = \gcd\ ( \{ a_i - a_j:1\le i,j\le k \} ) $;
this can be restated as $r\equiv 0\pmod {n/\gcd(n,g)}$.  Finally, the number of
values $r\in\Z/n\Z$ with this property is $\gcd(n,g)$.
\end{proof}

%#######################################################################
%#######################################################################

\section{The case $k\le 2$}
\label{sec 2}

In this section we show that, for $k\le 2$, two $n$-by-$n$ \zo circulants of
weight $k$ are affinely equivalent if and only if they are \pq\ equivalent;
in fact, we show that these properties are equivalent to similarity of the
autocorrelation matrices.  If $k\le 1$ this is clear, since all $n$-by-$n$
\zo circulants of weight $k$ are affinely equivalent.
For $k=2$ the result is contained in the following theorem.

\begin{theorem}
\label{thm 2}
The number of affine classes of $n$-by-$n$ weight-$2$
\zo circulants is $\tau (n) -1$, where $\tau (n)$ denotes the number of
divisors of $n$.
Furthermore, weight-$2$ circulants in distinct affine classes have dissimilar
autocorrelation matrices.
\end{theorem}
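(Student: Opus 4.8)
The plan is to put every weight-$2$ residue set into a normal form, reduce the classification to an orbit count, and then read off the statement about autocorrelation matrices from Proposition~\ref{lcm_prop}.

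First I would note that any weight-$2$ set $\{a_1,a_2\}_n$ is carried by the translation $x\mapsto x-a_1$ to a set $\{0,d\}_n$ with $d:=a_2-a_1\not\equiv 0\pmod n$, so each affine class of weight-$2$ circulants is represented by some $\{0,d\}_n$ with $d\ne 0$. Next I would determine when two such representatives are affinely equivalent. If $\{0,d'\}_n=u\{0,d\}_n+v$ with $u$ a unit, then $\{v,\,ud+v\}=\{0,d'\}$, so either ($v=0$ and $ud=d'$) or ($v=d'$ and $ud=-d'$); since $-1$ is a unit, in either case $d'$ lies in the orbit $(\Z/n\Z)^{\times}d$, and conversely every element of that orbit evidently gives an affinely equivalent set. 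Hence the affine classes of weight-$2$ circulants correspond bijectively to the orbits of $\Z/n\Z\setminus\{0\}$ under the multiplicative action of $(\Z/n\Z)^{\times}$.

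Then I would invoke the elementary fact that this orbit is exactly $\{x\in\Z/n\Z:\gcd(x,n)=\gcd(d,n)\}$: multiplying by a unit does not change the gcd with $n$, while conversely if $\gcd(x,n)=\gcd(d,n)=g$ one writes $x=gx_0$ and $d=gd_0$ with $x_0,d_0$ coprime to $n/g$, picks a unit of $\Z/(n/g)\Z$ sending $d_0$ to $x_0$, and lifts it to a unit of $\Z/n\Z$ using surjectivity of $(\Z/n\Z)^{\times}\to(\Z/(n/g)\Z)^{\times}$. Consequently the orbits are indexed by the set of values $\gcd(d,n)$ for $d\ne 0$, that is, by the divisors of $n$ strictly smaller than $n$; there are $\tau(n)-1$ of those, which proves the first assertion.

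For the second assertion I would apply Proposition~\ref{lcm_prop} with $k=2$: for $\{0,d\}_n$ the difference set $\{a_i-a_j\}$ equals $\{0,d,-d\}$, so the eigenvalue $k^2=4$ appears in $AA^T$ with multiplicity exactly $\gcd(n,d)$. Two weight-$2$ circulants in distinct affine classes have representatives $\{0,d\}_n$ and $\{0,d'\}_n$ with $\gcd(n,d)\ne\gcd(n,d')$ by the bijection above, so their autocorrelation matrices have $4$ as an eigenvalue with different multiplicities and therefore are not similar. I expect no real obstacle here: the only mildly delicate point is the orbit computation, which is standard, and the similarity statement is an immediate corollary of Proposition~\ref{lcm_prop}. (The substantive work in proving Theorem~\ref{thm intro} lies in the case $k=3$, handled in the later sections.)
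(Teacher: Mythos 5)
Your proposal is correct and follows essentially the same route as the paper's proof: normalize to $\{0,d\}_n$, observe that the affine class is determined by $\gcd(d,n)$, count the proper divisors of $n$, and invoke Proposition~\ref{lcm_prop} to distinguish the autocorrelation matrices. The only difference is that you spell out in full the orbit computation showing $\{0,d\}_n$ is linearly equivalent to $\{0,\gcd(d,n)\}_n$ (including the lift of a unit along $(\Z/n\Z)^{\times}\to(\Z/(n/g)\Z)^{\times}$), whereas the paper states this equivalence without detail.
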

\begin{proof}
Let $\{ a_1, a_2 \}_n$ be the residue set corresponding to a weight-$2$
circulant.  Put $g := \gcd(n,a_2-a_1)$.
Plainly $\{ a_1,a_2\}_n$ is affinely equivalent to $\{ 0, a_2-a_1\}_n$,
which is linearly equivalent to $\{0,g\}_n$.  Conversely, by
Proposition~\ref{lcm_prop}, if $g,g'$ are divisors of $n$ with
$1\le g<g'<n$, then $\{0,g\}_n$ and $\{0,g'\}_n$ correspond to circulants
with dissimilar autocorrelation matrices.  The result follows.
\end{proof}

%#######################################################################
%#######################################################################

\section{Preliminaries for larger $k$}
\label{sec SDA}

In the previous section we proved Theorem~\ref{thm intro} for $k\le 2$.
The remaining case $k=3$ is vastly more difficult.  In this section we
begin our attack on this case by showing that two weight-$3$ circulants
are affinely equivalent if and only if their autocorrelation matrices
are linearly equivalent.  More explicitly, if
$\setA = \{ a_1, ..., a_k \}$ is a set of $k$ residues mod $n$,
let $\Delta (\setA ) $ be the multiset of $k^2$ residues
$\{ a_i-a_j \mid 1 \le i,j \le k\}$.
If $\setA$ is affinely equivalent to another residue set $\setB$,
then there is a unit $u$ mod~$n$ such that $\Delta(\setA)=u\Delta(\setB)$.
We will prove the converse when $k=3$.
Since $u\Delta(\setB)=\Delta(u\setB)$, it suffices to prove the converse
in case $u=1$.

\begin{definition}
For positive integers $n$ and $k$,
the {\it Same Difference Assertion}, or $\SDA(n,k)$, is the following
assertion: for any sets $\setA,\setB$ of $k$ residues mod $n$, we have
$\Delta(\setA) = \Delta(\setB)$ if and only if $\setA$ is affinely
equivalent to $\setB$.
\end{definition}

\begin{remark}
As noted above, $\SDA(n,k)$ is equivalent to saying that, for any two
$n$-by-$n$ \zo circulants of weight $k$, affine equivalence of the
circulants is equivalent to linear equivalence of the autocorrelation
matrices.
\end{remark}

\begin{proposition}\label{prop SDA3}
$\SDA(n,3)$ is true for each $n>0$.
\end{proposition}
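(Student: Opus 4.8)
The interesting direction of $\SDA(n,3)$ is $\Delta(\setA)=\Delta(\setB)\Rightarrow\setA$ affinely equivalent to $\setB$; the converse is immediate (if $\setB=u\setA+v$ then $\Delta(\setB)=u\Delta(\setA)$, reducing as in the text to $u=1$). So suppose $\Delta(\setA)=\Delta(\setB)$. The plan is to normalize and then split into a ``generic'' case and a short list of degenerate cases. Since $\setA$ and $\setB$ have size $3$, after translating I may take $0\in\setA\cap\setB$, say $\setA=\{0,a,b\}$ and $\setB=\{0,c,d\}$ with $a,b,c,d,a-b,c-d$ all nonzero; the three zero entries of each $\Delta$ come from the diagonal $i=j$, so after discarding them the hypothesis becomes the multiset identity in $\Z/n\Z$
\[
  \{a,-a,b,-b,a-b,b-a\}=\{c,-c,d,-d,c-d,d-c\}.
\]
I would call $\setA$ \emph{degenerate} when these six residues are not all distinct; a quick check shows this happens exactly when $\setA$ is a three-term arithmetic progression ($a+b=0$, $b=2a$, or $a=2b$) or, up to translation, $n/2\in\setA$ ($2a=0$, $2b=0$, or $2(a-b)=0$).

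In the generic case the six residues form three disjoint pairs $\{a,-a\}$, $\{b,-b\}$, $\{a-b,b-a\}$. Any $3$-element sub-multiset summing to $0$ must take one residue from each pair (using both residues of a pair forces the third summand to be $0$), and writing its sum as $(s_1+s_3)a+(s_2-s_3)b$ with signs $s_i\in\{\pm1\}$ one sees it vanishes identically only for $(s_1,s_2,s_3)=(1,-1,-1)$ and its negative; the other six sign patterns each force one of $2a=0$, $2b=0$, $2(a-b)=0$, which is excluded. Hence the left multiset has exactly two zero-sum triples, $\{a,-b,b-a\}$ and its negative, and likewise the right one; since the multisets agree, these triples coincide, so $\{c,-d,d-c\}$ equals $\{a,-b,b-a\}$ or its negative. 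A final elementary observation closes the case: for any zero-sum triple $\{p_1,p_2,p_3\}$ the six $3$-sets $\{0,p_i,-p_k\}$ with $i\ne k$ are pairwise affinely equivalent (apply $t\mapsto-t+p_i$ and $t\mapsto-t$), and both $\setA$ and $\setB$ occur among the six built from their common triple, so $\setA$ is affinely equivalent to $\setB$.

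For the degenerate cases I would argue directly. If $\setA=\{0,g,2g\}$, let $H=\langle g\rangle$ of order $m$; the displayed multiset lies in $H$, so $c,d\in H$ and $\setB\subseteq H$, and identifying $H$ with $\Z/m\Z$ via $g\mapsto1$ reduces matters to: a $3$-subset of $\Z/m\Z$ with the same difference multiset as $\{0,1,2\}$ is an affine image of $\{0,1,2\}$. Here the three pairs of the target are $\{1,-1\}$, $\{1,-1\}$, $\{2,-2\}$, which pins $\setB$ down to $\{0,1,2\}$, $\{0,-1,-2\}$, or $\{0,1,-1\}$ (the few small $m$ being checked by hand), each a translate or reflection of $\{0,1,2\}$; an affine equivalence over $\Z/m\Z$ lifts to $\Z/n\Z$ because $(\Z/n\Z)^\times\to(\Z/m\Z)^\times$ is onto. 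If $\setA$ is not such a progression but, after translating, $n/2\in\setA$, write $\setA=\{0,n/2,h\}$; then $n/2$ is the unique order-$2$ element and occurs in $\Delta(\setB)$ with multiplicity $2$, which forces one of $c,d,c-d$ to equal $n/2$ and hence $\setB=\{0,n/2,d\}$ after translation. Writing $[x]=x+\{0,n/2\}$, the two difference multisets are $\{n/2,n/2\}\cup[h]\cup[-h]$ and $\{n/2,n/2\}\cup[d]\cup[-d]$; equality forces $d\in\{h,\,h+n/2,\,-h,\,n/2-h\}$, and in each case $\setB$ is a translate or reflection of $\setA$.

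The main obstacle is not any single deduction but the bookkeeping of the degenerate analysis: confirming the case split is exhaustive, checking that the two degenerate families overlap only in the configuration $\{0,n/4,n/2\}$ (absorbed into the progression case), and disposing of the small moduli. One could instead encode $\Delta(\setA)=\Delta(\setB)$ via the discrete Fourier transform as $|1+\zeta^{ar}+\zeta^{br}|=|1+\zeta^{cr}+\zeta^{dr}|$ for all $r$, turning the statement into one about vanishing sums of roots of unity; that reformulation is heavier here but is the one that extends to the harder weight-$4$, $5$, and $6$ analyses.
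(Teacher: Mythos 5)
Your proof is correct in substance, but it proceeds very differently from the paper's, which settles the whole thing in one geometric stroke with no case split. The paper identifies $\Z/n\Z$ with $n$ equally spaced points on a circle of circumference $n$, so that a $3$-element set becomes an inscribed triangle whose three arc lengths $\ell_1\le\ell_2\le\ell_3$ satisfy $\ell_1+\ell_2+\ell_3=n$; the observation is that $\Delta(\setA)$ is (up to the three zeros) exactly $\{\ell_1,\ell_2,\ell_3,\ell_1+\ell_2,\ell_1+\ell_3,\ell_2+\ell_3\}$, from which the sorted triple $(\ell_1,\ell_2,\ell_3)$ can be recovered, and two inscribed triangles with equal arc triples are carried to one another by a rotation composed with at most one reflection, i.e.\ by an affine map $t\mapsto\pm t+v$. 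Your argument instead normalizes $0\in\setA\cap\setB$, reduces the hypothesis to the multiset identity $\{a,-a,b,-b,a-b,b-a\}=\{c,-c,d,-d,c-d,d-c\}$, and in the generic case locates the two zero-sum triples $\pm\{a,-b,b-a\}$ by the sign analysis $(s_1+s_3)a+(s_2-s_3)b=0$, then shows the six sets $\{0,p_i,-p_k\}$ built from a zero-sum triple form a single affine orbit under $t\mapsto -t$ and $t\mapsto -t+p_i$; the degenerate configurations (three-term progressions, and sets with an $n/2$ difference) are dispatched separately, the former by passing to the subgroup $H=\langle g\rangle\cong\Z/m\Z$ and lifting units along the surjection $(\Z/n\Z)^\times\to(\Z/m\Z)^\times$. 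Both routes give the sharper conclusion that the multiplier can be taken $\pm1$, which the paper records as a remark. What the geometric proof buys is uniformity: the arc-length triple is exactly the invariant both proofs are recovering, and on the circle there is no generic/degenerate dichotomy to track, no separate treatment of arithmetic progressions, and no small-modulus checks. What your combinatorial proof buys is that it stays entirely inside $\Z/n\Z$ and makes explicit the ``zero-sum triple'' structure that also underlies the harder arguments in Sections~\ref{sec roots}--\ref{sec 3}; but the bookkeeping you flag (exhaustiveness of the degenerate split, the small-$m$ exceptions such as $\{0,2,3\}$ in $\Z/4\Z$, the lift from $\Z/m\Z$ to $\Z/n\Z$) is real work that the paper's picture sidesteps entirely.
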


\begin{proof}
Suppose $\setA$ and $\setB$ are order-$3$ subsets of $\Z/n\Z$
with $\Delta(\setA)=\Delta(\setB)$.  Identify elements of $\Z/n\Z$
with points on the circle of circumference $n$ centered at the origin,
via $j\mapsto e^{2\pi \imag j/n} n/(2\pi)$.  In this way we can
identify $\setA$ and $\setB$ with sets of 3 points on this circle,
where the arclength between two points equals the difference between
the corresponding elements of $\Z/n\Z$.
In the following diagram, $\setA$ is recorded in the left-hand circle,
and $\setB$ is recorded in the right-hand circle.

\par\noindent
\scalebox{0.70}{\includegraphics{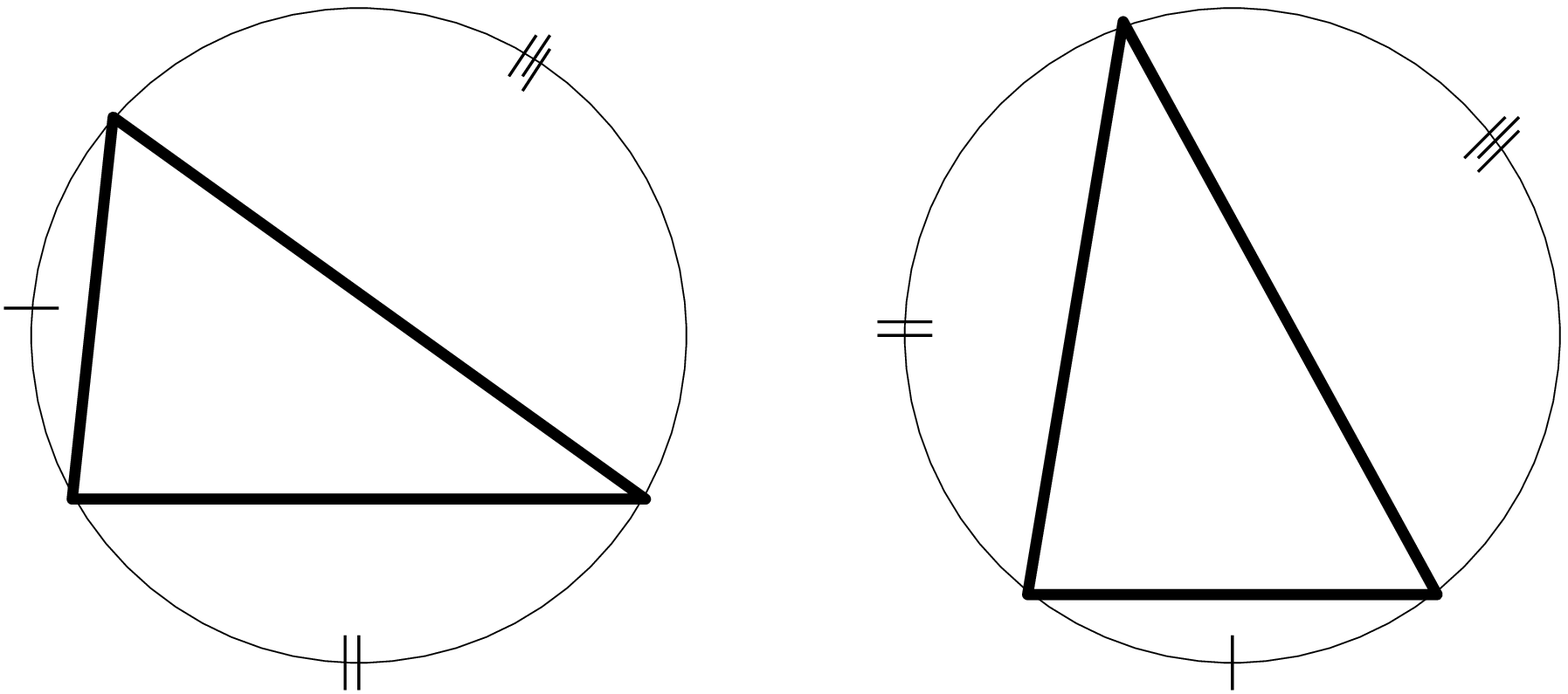}}

In each diagram the circle is divided into three arcs; mark the
shortest arc with a slash, the second-shortest arc with a double slash,
and the longest arc with a triple slash.  Note that $\Delta(\setA)$ consists of
the three arclengths from the left-hand circle, together with all sums
of two such arclengths, and three copies of $0$.  Thus the arclengths with a
slash and double-slash
in the left circle must equal the corresponding arclengths in the right
circle.  Hence the two triangles have equal angles, so they are congruent.
We can make their angles occur in the same order, by replacing $\setA$ by
$-\setA$ if necessary.  Then there is a rotation which makes the triangles
coincide.  Thus $\setA$ and $\setB$ are affinely equivalent.
\end{proof}

\begin{remark}
This proof shows that the multiplicative coefficient
can be taken to be $\pm 1$.
\end{remark}

We only need information about $k=3$ for our main result,
but we will say more about larger values of $k$ later in the
paper.
For example, $\SDA(n,4)$ is not always true, one counterexample
being the sets $\setA:=\{0,1,4,7\}_8$ and $\setB:=\{0,1,3,4\}_8$:
one easily checks that $\Delta(\setA)=\Delta(\setB)$, but $\setA$
and $\setB$ are affinely inequivalent since $\setA$ contains
arithmetic progressions of length $3$ but $\setB$ does not.
However, in Section~\ref{sec 4} we will prove $\SDA(n,4)$ for odd $n$,
and $\SDA(n,5)$ for $n$ coprime to $10$.  But in Section~\ref{sec 6} we
will give counterexamples to $\SDA(n,k)$ whenever $k>5$ and $n>2k+10$.

%#######################################################################
%#######################################################################

\section{Vanishing sums of roots of unity}
\label{sec roots}

The proof of our main result relies on some facts about vanishing
sums of roots of unity, which we discuss in this section.
Let $\settA$ be a multiset of roots of unity such that $\sum_{\alpha\in \settA}
\alpha=0$.
If the only sub-multisets of $\settA$ with zero sum are the empty set and $\settA$,
we say $\settA$ is a \emph{minimal} vanishing sum of roots of unity.  The
\emph{weight} of
the sum is the size of the multiset $\settA$.  We can multiply
any vanishing sum of roots of unity by an arbitrary root of unity, without
affecting minimality.

Vanishing sums of roots of unity have been studied extensively.
In our situation, it turns out that we need to understand
vanishing sums of twelve roots of unity.  In fact, it is shown in
\cite{poonen_rubin} that, up to multiplying by an arbitrary root of unity,
there are precisely $107$ minimal vanishing sums of weight at most $12$.
However, complications arise in passing from minimal vanishing sums to
nonminimal vanishing sums, since each minimal sum can be multiplied by
an arbitrary root of unity: thus there are infinitely many vanishing
sums of twelve roots of unity, so one cannot simply test them all.
We give a self-contained approach which does not require the results
from \cite{poonen_rubin}.

\begin{lemma}
\label{mann}
Let $\settA$ be a weight-$d$ minimal vanishing sum of roots of unity, and
suppose $1\in \settA$.  Let $n$ be the least common multiple of the orders
of the roots of unity in $\settA$.  Then $n$ divides the product of the primes
not exceeding $d$.  If $d$ is prime and $d\mid n$ then $\settA$ consists of
all the $d\tth$ roots of unity (so $n=d$).  If $d-1$ is prime and
$(d-1)\mid n$ then $n\mid 6(d-1)$.
\end{lemma}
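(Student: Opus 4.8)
The plan is to use Galois theory over cyclotomic fields to control how the roots of unity in $\settA$ distribute among residue classes modulo a prime power dividing $n$.  Fix a prime $p\mid n$ and write $n=p^am$ with $p\nmid m$; let $\mu_m$ denote the group of $m$th roots of unity.  Since $\gcd(p^a,m)=1$, every $n$th root of unity factors uniquely as $\zeta_{p^a}^{\,e}\gamma$ with $\gamma\in\mu_m$, and for $\alpha\in\settA$ I write $\alpha=\zeta_{p^a}^{\,e(\alpha)}\gamma(\alpha)$ accordingly; the hypothesis $1\in\settA$ enters through the elementary remark that it forces $e(1)=0$ and $\gamma(1)=1$.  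The argument has two basic steps — (i) $n$ is squarefree, and (ii) every prime factor of $n$ is at most $d$ — and each of the two refinements then reruns the analysis of (ii) with the prescribed prime ($d$, resp.\ $d-1$).

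For (i), suppose $a\ge2$.  Consider the automorphism $\tau$ of $\Q(\zeta_n)$ attached to the element of $(\Z/n\Z)^*$ which is $1+p^{a-1}$ modulo $p^a$ and $1$ modulo $m$; since $a\ge2$ this is an element of $(\Z/p^a\Z)^*$ of order exactly $p$, so $\tau$ has order $p$ and fixes $\mu_m$ pointwise.  Applying $\sum_{k=0}^{p-1}\tau^k$ to $\sum_{\alpha\in\settA}\alpha=0$ and simplifying via $(1+p^{a-1})^k\equiv1+kp^{a-1}\pmod{p^a}$ yields, after division by $p$, the relation $\sum_{\alpha:\,p\mid e(\alpha)}\alpha=0$.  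By minimality this sub-multiset is empty or all of $\settA$: it is nonempty because $e(1)=0$, and if it is all of $\settA$ then every $\alpha$ has order dividing $n/p$, contradicting the choice of $n$.  Hence $a=1$ for every $p\mid n$, i.e.\ $n$ is squarefree.

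For (ii), let $p\mid n$ be arbitrary, so $n=pm$; since $[\Q(\zeta_n):\Q(\zeta_m)]=p-1$, the powers $1,\zeta_p,\dots,\zeta_p^{\,p-2}$ form a $\Q(\zeta_m)$-basis of $\Q(\zeta_n)$, with $\zeta_p^{\,p-1}=-(1+\zeta_p+\dots+\zeta_p^{\,p-2})$.  Writing $\sum_{\alpha\in\settA}\alpha=\sum_{j=0}^{p-1}\big(\sum_{e(\alpha)=j}\gamma(\alpha)\big)\zeta_p^{\,j}=0$ and comparing coefficients of the basis elements shows that the inner sums $\sum_{e(\alpha)=j}\gamma(\alpha)$ are all equal, to a common value $T\in\Q(\zeta_m)$; thus $\sum_{e(\alpha)=j}\alpha=\zeta_p^{\,j}T$ for every $j$.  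If $T=0$ then each of the $p$ residue classes of $\settA$ is a vanishing subsum, so by minimality all but one are empty, and then $e(1)=0$ forces every $\alpha$ to lie in $\mu_m$ — impossible.  Hence $T\ne0$, so all $p$ classes are nonempty; as they partition the $d$-element multiset $\settA$ we conclude $p\le d$, which together with (i) shows $n$ divides the product of the primes not exceeding $d$.  For the refinement with $d$ prime and $d\mid n$, take $p=d$: the $d$ nonempty classes partition the $d$ elements of $\settA$, so each is a singleton with $\gamma$-value $T$, whence $\settA=T\mu_d$, and since $1\in\settA$ this gives $\settA=\mu_d$ and $n=d$.  For the refinement with $d-1$ prime and $(d-1)\mid n$, take $p=d-1$: the $d-1$ nonempty classes partition $d$ elements, so exactly one class has two members $\zeta_{d-1}^{\,j_1}\beta'$, $\zeta_{d-1}^{\,j_1}\beta''$ and the rest are singletons with $\gamma$-value $T$; the two-element class yields $\beta'+\beta''=T$, i.e.\ $\{\beta',\beta'',-T\}$ is a minimal vanishing sum of three roots of unity which contains $1$ or $-1$ (according to whether the doubled class is the one containing the term $1$).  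By the classification of three-term vanishing sums it is a rotation of $\{1,\omega,\omega^2\}$ with $\omega=e^{2\pi\imag/3}$, and containing $\pm1$ forces this rotation to be $\mu_3$ or $-\mu_3$, so the factor distinguishing $\beta',\beta''$ from $T$ is a sixth root of unity.  Reading off the orders of all elements of $\settA$ then gives $n\mid\operatorname{lcm}(d-1,6)$, hence $n\mid6(d-1)$.

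I expect the main obstacle to be the dichotomy $T=0$ versus $T\ne0$ in step (ii): everything downstream rests on excluding $T=0$, and this is exactly the point at which the hypothesis $1\in\settA$ is indispensable — without it, $\settA$ could be a common scaling of $\mu_m$ with $m$ arbitrarily large, and the conclusion fails.  A secondary technical nuisance is the bookkeeping in the $(d-1)\mid n$ refinement, where one must keep track of whether the distinguished term $1$ sits in the doubled residue class or in a singleton, since this governs whether the auxiliary three-term relation is normalized at $1$ or at $-1$; in either case it reduces to the already-treated three-term situation.
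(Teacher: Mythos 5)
Your proof is correct and follows essentially the same strategy as the paper's: split $\settA$ into residue classes of the exponent mod $p$ and use the irreducibility of the $p$th cyclotomic polynomial over $\Q(\zeta_{n/p})$ (equivalently, your $\Q(\zeta_m)$-basis $1,\zeta_p,\dots,\zeta_p^{p-2}$) to force the class-sums to share a common value $T$, then argue by pigeonhole for $p$ equal to $d$ or $d-1$. Your squarefree step---averaging by an order-$p$ Galois automorphism of $\Q(\zeta_n)$---is a minor repackaging of the paper's device of expanding in the basis $1,\zeta,\dots,\zeta^{p-1}$ over $\Q(\zeta^p)$; both isolate the same proper vanishing subsum $\sum_{p\mid e(\alpha)}\alpha=0$ and derive a contradiction from $1\in\settA$.
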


\begin{proof}
First we show $n$ is squarefree.  If not then $n=rp^\ell$ where $p$ is prime,
$\ell>1$, and $r$ is coprime to $p$.  Let $\zeta$ be a primitive $n\tth$ root
of unity.  For any $i$ with $0\le i<n$, we can write $i=pa+b$ where $0\le b<p$,
so $\zeta^i=\zeta^b(\zeta^p)^a$.  Rewriting our sum of roots of unity in this
manner, we get a vanishing linear combination of $\zeta^0, \zeta^1,\dots,
\zeta^{p-1}$ with coefficients in $\Q(\zeta^p)$.  The field extension
$\Q(\zeta)/\Q(\zeta^p)$ has degree $[\Q(\zeta):\Q]/[\Q(\zeta^p):\Q]=
\phi(n)/\phi(n/p)=p$.  Hence our vanishing linear combination must have all
coefficients being zero.  By minimality, only one coefficient can be a
nontrivial sum of roots of unity.  Since the sum includes $1$, it follows that
every root of unity in the sum has order dividing $n/p$, a contradiction.  Thus
$n$ is squarefree.

Now let $p$ be a prime dividing $n$, and rewrite the sum as
$\sum_{\zeta^p=1}\zeta s_\zeta$ where each $s_\zeta$ is a sum of $(n/p)\tth$
roots of unity.  Since the sum includes $1$, the term $s_1$ is a nontrivial sum
of roots of unity.  By the definition of $n$, some other $s_\zeta$ must also be
a nontrivial sum of roots of unity.  By minimality, any $s_\zeta$ which is
nontrivial must be nonzero.  Letting $\mu_j$ be the set of $j\tth$ roots
of unity, we know that
$\Q(\mu_p,\mu_{n/p})=\Q(\mu_n)$ is an extension of $\Q(\mu_{n/p})$ of degree
$[\Q(\mu_n):\Q]/[\Q(\mu_{n/p}):\Q]=\phi(n)/\phi(n/p)=p-1$.  Thusthe polynomial
$x^{p-1}+x^{p-2}+\dots+1$ (whose roots are the primitive $p\tth$ roots of
unity) is
irreducible over $\Q(\mu_{n/p})$, so every $s_\zeta$ takes the same value.
In particular, each $s_\zeta$ is nonzero, so our sum has weight at least $p$,
whence
$n$ divides the product of the primes not exceeding $d$.  If $d=p$ then every
$s_\zeta$ is a single root of unity, and $s_1=1$, so every $s_\zeta=1$ and thus
our sum consists
of all the $p\tth$ roots of unity (and $n=p$).  Finally, suppose $d-1=p$.
Then all but one $s_\zeta$ consists of a single root of unity $\alpha$, and one
$s_\zeta$ is the sum of two roots of unity $\beta+\gamma$.  Since all $s_\zeta$
have the same value, we have
$-\alpha+\beta+\gamma=0$, which is a weight-three vanishing sum of roots of
unity,
and thus (from what we proved so far) must be a scalar times the sum of the
cube
roots of unity.  Hence both $\beta$ and $\gamma$ are sixth roots of unity times
$\alpha$.  Since our original sum includes $1$, one of $\alpha,\beta,\gamma$
equals $1$, so they all are sixth roots of unity and thus $n$ divides
$6p$.  This concludes the proof.
\end{proof}

\begin{remark}
All but the last sentence of the lemma was proved by Mann \cite{mann}.
\end{remark}

\begin{corollary}
\label{small relations}
Every minimal vanishing sum of roots of unity of weight $d<6$ has $d$
being prime and moreover has the form $\alpha\zeta_1+\alpha\zeta_2+\dots+
\alpha\zeta_d$
where $\alpha$ is a fixed root of unity and the $\zeta_i$ are all the distinct
$d\tth$ roots of unity.
\end{corollary}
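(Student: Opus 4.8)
The plan is to deduce Corollary~\ref{small relations} from Lemma~\ref{mann} after a short case analysis on $d\in\{1,2,3,4,5\}$. Since multiplying a vanishing sum by a root of unity changes neither its weight nor its minimality, and since the conclusion already carries a free root-of-unity scalar $\alpha$, I may assume $1\in\settA$. Let $n$ be the least common multiple of the orders of the roots of unity in $\settA$. As $d$ copies of $1$ sum to $d\neq 0$, not all elements of $\settA$ equal $1$, so $n>1$. Lemma~\ref{mann} then gives that $n$ is squarefree and every prime dividing $n$ is at most $d<6$, hence lies in $\{2,3,5\}$; moreover, if $d$ is itself a prime dividing $n$, the lemma says $\settA$ is exactly the set of all $d\tth$ roots of unity, which is the desired conclusion. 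So it remains to show that for each $d<6$, either $d$ is a prime dividing $n$, or no such $\settA$ exists. The case $d=1$ is immediate. If $5\mid n$, then $5\le d$ forces $d=5$, so $5=d$ divides $n$ and we are done; thus it remains to treat the case $5\nmid n$, i.e.\ $n\mid 6$ (which, for $d\in\{2,3,4\}$, holds automatically).

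The key point will be the following structural fact: \emph{a minimal vanishing sum of roots of unity all of whose orders divide $6$ has weight $2$ or $3$, and equals, up to a root-of-unity scalar, either $\{1,-1\}$ or the set of all cube roots of unity}. Granting this, the case $d=2$ gives the scaled copy of $\{1,-1\}$ (which forces $n\mid 2$, weight $d=2$ prime, and $\settA=\{1,-1\}$ since $1\in\settA$), the case $d=3$ gives the cube roots (forcing $n=3$, $d=3$ prime, and $\settA=\{1,\zeta_3,\zeta_3^2\}$ since $1\in\settA$), and the cases $d=4$ and $d=5$ with $5\nmid n$ are impossible because $4$ and $5$ are not in $\{2,3\}$. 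This completes the reduction, so everything comes down to the structural fact.

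To prove the structural fact, I would argue as follows. Weight $1$ is impossible, and weight $2$ forces $\alpha+\beta=0$, hence $\beta=-\alpha$, a scaled copy of $\{1,-1\}$. So suppose the weight is at least $3$. Then no root of unity $\beta$ and its negative $-\beta$ can both lie in $\settA$, since $\{\beta,-\beta\}$ would be a proper nonempty zero-sum sub-multiset, contradicting minimality. Writing $\zeta=e^{2\pi\imag/6}$, the six sixth roots of unity split into the three antipodal pairs $\{\zeta^{\,j},\zeta^{\,j+3}\}$ for $j=0,1,2$, so $\settA$ uses at most one element $\eta_j=\epsilon_j\zeta^{\,j}$ (with $\epsilon_j=\pm1$) from each pair, say with multiplicity $m_j\ge 0$, and $m_0+m_1+m_2$ equals the weight. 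Using $\zeta^2=\zeta-1$ together with the $\Q$-linear independence of $1$ and $\zeta$, the sum $m_0\epsilon_0+m_1\epsilon_1\zeta+m_2\epsilon_2\zeta^2$ vanishes exactly when $m_0\epsilon_0=m_2\epsilon_2$ and $m_1\epsilon_1=-m_2\epsilon_2$; taking absolute values forces $m_0=m_1=m_2$, so the weight is $3m_0$. By minimality $m_0=1$, and then $\epsilon_0=\epsilon_2$ and $\epsilon_1=-\epsilon_2$, so $\settA=\epsilon_2\{1,-\zeta,\zeta^2\}=\epsilon_2\{1,\zeta^4,\zeta^2\}=\epsilon_2\{1,\zeta_3,\zeta_3^2\}$, a scalar times the cube roots of unity. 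The main obstacle is precisely this last argument — equivalently, ruling out minimal vanishing sums of weight $4$ and $5$ among sixth roots of unity; once that is in hand the corollary is just bookkeeping on top of Lemma~\ref{mann}.
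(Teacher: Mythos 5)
Your proof is correct, and your overall strategy matches the paper's: use Lemma~\ref{mann} to dispose of everything except scalar multiples of vanishing sums built from sixth roots of unity, and then settle the sixth-root case by an ad hoc degree-two argument. The only genuine divergence is inside that last step. The paper groups the sum by cube-root factor, writing it as $\sum_{\zeta^3=1}\zeta s_\zeta$ with each $s_\zeta$ a sum of $\pm 1$'s, and uses the irreducibility of $1+x+x^2$ over $\Q$ to force all $s_\zeta$ to be equal; minimality then leaves only $\{1,-1\}$ or the cube roots of unity. You instead group by sign, pairing the sixth roots into the three antipodal pairs $\{\zeta^j,-\zeta^j\}$, observe that for weight at least $3$ minimality forbids using both elements of a pair, and then expand in the $\Q$-basis $\{1,\zeta\}$ via the minimal polynomial $\zeta^2-\zeta+1$ to force equal multiplicities $m_0=m_1=m_2$; minimality then gives $m_0=1$ and the cube-root solution. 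These are two equivalent ways of exploiting the degree-$2$ extension $\Q(\mu_6)/\Q$ — one factors through $\Q(\omega)$, the other through the sign homomorphism — and either is perfectly serviceable. Your more explicit case-by-$d$ bookkeeping (isolating $5\mid n$ to force $d=5$, and noting $n\mid 6$ otherwise) is something the paper compresses into a single sentence, but the substance is identical.
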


\begin{proof}
Lemma~\ref{mann} proves this unless the sum is a scalar times a sum involving
only sixth roots of unity.  So consider a minimal vanishing sum of sixth
roots of unity, which we may assume includes $1$.  Rewrite this sum in the form
$\sum_{\zeta^3=1}\zeta s_\zeta$,
where each $s_\zeta$ is a sum of $1$'s and $-1$'s.  Since $1+x+x^2$ is
irreducible over $\Q$, every $s_\zeta$ must have the same value.  If this
common value is zero, then by minimality
our vanishing sum is $-1+1$.
If the common value is not zero, then by minimality our vanishing sum is
the sum of the cube roots of unity.
\end{proof}

%#######################################################################
%#######################################################################

\section{Proof of main result}
\label{sec 3}

In this section we complete the proof of Theorem~\ref{thm intro} by proving

\begin{theorem}
\label{k=3}
Let $\settA =\{\alpha_1,\alpha_2,\alpha_3,\bar\alpha_1,\bar\alpha_2,
\bar\alpha_3\}$ and $\settB =\{\beta_1,\beta_2,\beta_3,$
$\bar\beta_1,\bar\beta_2,\bar\beta_3\}$ be
multisets of $n^{\operatorname{th}}$ roots of unity, where
$\prod\alpha_j=1=\prod\beta_j$.
Suppose that the two multisets $\{\sum_{\phi\in \settA}\phi^r:1\le r\le n\}$ and
$\{\sum_{\psi\in \settB}\psi^r:1\le r\le n\}$ are identical.  Then there is
an integer $u$ coprime to $n$ such that $\settA =\{\psi^u:\psi \in \settB\}$.
\end{theorem}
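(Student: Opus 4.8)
Throughout, write $\zeta=e^{2\pi\imag/n}$, $\alpha_j=\zeta^{c_j}$, $\beta_j=\zeta^{d_j}$, so that $c_1+c_2+c_3\equiv 0\equiv d_1+d_2+d_3\pmod n$; the desired conclusion $\settA=\{\psi^u:\psi\in\settB\}$ says, in these terms, that the multisets $\{\pm c_1,\pm c_2,\pm c_3\}$ and $u\{\pm d_1,\pm d_2,\pm d_3\}$ coincide in $\Z/n\Z$ for some unit $u$. Put $S(r):=\sum_{\phi\in\settA}\phi^r$ and $T(r):=\sum_{\psi\in\settB}\psi^r$, so the hypothesis is equality of the multisets $\{S(r):0\le r<n\}$ and $\{T(r):0\le r<n\}$. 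The first step is to reduce to the case $\langle\alpha_1,\alpha_2,\alpha_3\rangle=\langle\beta_1,\beta_2,\beta_3\rangle=\mu_n$. Indeed $6$ is the maximum value of $S$, attained for exactly $n/|\langle\alpha_j\rangle|$ values of $r$ (essentially Proposition~\ref{lcm_prop}), so the hypothesis forces $|\langle\alpha_j\rangle|=|\langle\beta_j\rangle|=:d$; since $S(r)$ depends only on $r\bmod d$ and each residue mod $d$ occurs $n/d$ times among $0\le r<n$, the statement at level $n$ is equivalent to the statement at level $d$, and a unit mod $d$ lifts to a unit mod $n$ by the Chinese Remainder Theorem (the roots involved lie in $\mu_d$). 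So from now on $\gcd(\{c_j\}\cup\{n\})=1$.

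Next I would bring in the Galois action of $G=\mathrm{Gal}(\Q(\mu_n)/\Q)\cong(\Z/n\Z)^{*}$: since $\sigma_u(\zeta)=\zeta^u$, one has $\sigma_u(S(r))=S(ur)$. Hence, with $\theta:=S(1)$ (a real algebraic integer, as $\settA$ is conjugation‑closed), the Galois conjugates of $\theta$ are exactly the values $S(u)$ for $u$ a unit, each occurring with multiplicity $|\mathrm{Stab}_G(\theta)|$; similarly for $\theta':=T(1)$. On the other hand, if $\gcd(r,n)=e>1$ then every $\phi^r$ with $\phi\in\settA$ lies in $\mu_{n/e}$, so $S(r)\in\Q(\mu_{n/e})$, a proper cyclotomic subfield of $\Q(\mu_n)$. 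The \emph{Key Claim} is that, under the running hypotheses, $\Q(\theta)$ lies in no proper cyclotomic field $\Q(\mu_m)$ with $m\mid n$, $m<n$. Granting this: every conjugate of $\theta$ generates the same field $\Q(\theta)$ (subfields of an abelian extension are Galois), so no conjugate of $\theta$ lies in any $\Q(\mu_m)$ with $m<n$; thus $\{S(u):u\text{ a unit}\}$ is disjoint from $\{S(r):\gcd(r,n)>1\}$, and likewise for $\settB$. Therefore the "unit part" of the common value–multiset is characterized intrinsically — as the submultiset of entries lying in no proper cyclotomic subfield — so it is the same for $\settA$ and $\settB$. In particular $\theta'$ is a Galois conjugate of $\theta$, say $\theta'=\sigma_u(\theta)=S(u)$ with $u$ a unit, and applying $\sigma_v$ gives $T(v)=S(uv)$ for every unit $v$.

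To prove the Key Claim, suppose $\Q(\theta)\subseteq\Q(\mu_{n/p})$ for a prime $p\mid n$ (it suffices to exclude these, since if the conductor $m<n$ of $\theta$ divides $n$ then $m\mid n/p$ for a suitable prime $p$). Then $\theta$ is fixed by $\mathrm{Gal}(\Q(\mu_n)/\Q(\mu_{n/p}))$, i.e.\ $S(1+t\,n/p)=S(1)$ for all $t$; writing $\zeta^{c(1+tn/p)}=\zeta^{c}(\zeta^{n/p})^{ct}$ and equating coefficients of the $p$ characters $t\mapsto(\zeta^{n/p})^{c_0t}$ of $\Z/p\Z$, one finds that for each $c_0\not\equiv0\pmod p$ the "slice" $\sum_{c\equiv c_0(p)}m_\settA(c)\,\zeta^{c}$ vanishes, where $m_\settA(c)$ is the multiplicity of $\zeta^c$ in $\settA$ — that is, it is a vanishing sum of at most six roots of unity. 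Full generation forces some slice with $c_0\not\equiv0$ to be nonempty, hence (vanishing sums have weight $\ne1$) of weight between $2$ and $6$; Corollary~\ref{small relations} and Lemma~\ref{mann} pin down such sums completely, and one checks that no resulting configuration is compatible with $\settA=\{\alpha_1,\alpha_2,\alpha_3,\bar\alpha_1,\bar\alpha_2,\bar\alpha_3\}$ once $n$ exceeds a small bound, the finitely many remaining $n$ being treated by hand. Finally, $T(v)=S(uv)$ for all units $v$ says that $\settB$ and $\settA^{u}:=\{\phi^{u}:\phi\in\settA\}$ — two conjugation‑closed multisets of six $n$‑th roots of unity of the required shape — have equal power sums, so their formal difference is a vanishing sum of at most twelve roots of unity; invoking the classification of small vanishing sums once more (with the shape eliminating the nontrivial possibilities, and a finite check for small $n$) forces $\settB=\settA^{u}$, which is the assertion of the theorem.

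The step I expect to be the real obstacle is precisely these two uses of vanishing sums of roots of unity — the Key Claim and the final matching step — where the classification of Lemma~\ref{mann} and Corollary~\ref{small relations} must be played off against the rigidity of the "difference" shape of $\settA$ and $\settB$; the delicate regime is when $2$, $3$, or $5$ divides $n$, since exactly then genuine short vanishing sums occur and have to be excluded by a direct case analysis. By contrast, the reduction to full generation and the Galois bookkeeping are routine.
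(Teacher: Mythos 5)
Your outline has the right instinct (reduce to the case where $n$ is the lcm of the orders, then exploit structure of vanishing sums), but the Galois-theoretic bridge you build between those two steps has a genuine hole. The \emph{Key Claim} --- that once $n$ is the lcm of the orders, $\Q(\theta)$ with $\theta=S(1)$ is contained in no proper cyclotomic field $\Q(\mu_m)$, $m\mid n$, $m<n$ --- is false. Take $n=6$, $\alpha_1=\zeta_6$, $\alpha_2=\zeta_6^2$, $\alpha_3=\zeta_6^3$, so that $\prod\alpha_j=\zeta_6^6=1$ and the lcm of the orders is $6$. Then $\settA=\{\zeta_6,\zeta_6^2,\zeta_6^3,\zeta_6^3,\zeta_6^4,\zeta_6^5\}$ and a direct computation gives $\theta=S(1)=-2\in\Q$, and indeed $S(1)=S(3)=-2$ with $3$ a non-unit mod $6$. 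This destroys the very disjointness you want to deduce from the Key Claim, namely that the unit-indexed values $\{S(u)\}$ are separated from the non-unit-indexed ones $\{S(r):\gcd(r,n)>1\}$. You hedge by saying finitely many small $n$ are to be treated by hand, but you never delimit which $n$ are exceptional nor explain why the Key Claim couldn't also fail for arbitrarily large $n$ with small prime factors (e.g.\ $n=6p$); once it fails, the deduction of $T(v)=S(uv)$ for units $v$ collapses. Moreover, even granting $T(v)=S(uv)$ for all units $v$, the last step is under-supported: equality of Fourier coefficients on the units alone determines the multiplicity function $m_\settA-m_{\settA^u}$ only modulo the span of the characters indexed by non-units, i.e.\ modulo functions pulled back from proper quotients $\Z/(n/e)\Z$. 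Asserting that ``invoking the classification of small vanishing sums once more \dots forces $\settB=\settA^u$'' is claiming the heart of the theorem, not proving it.

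The paper's own proof avoids this scaffolding entirely. After the same reduction to $n=\operatorname{lcm}$ of the orders (Lemma~\ref{lcm trick}), it simply grabs one equality $S(1)=T(r)$ guaranteed by the equal-multiset hypothesis --- with $r$ not assumed to be a unit --- and rewrites it as a single vanishing sum $\sum_{j=1}^3(\gamma_j+\bar\gamma_j-\delta_j-\bar\delta_j)=0$ of twelve $n$th roots of unity with $\prod\gamma_j=1=\prod\delta_j$. The whole argument is then a careful case analysis of the minimal vanishing subsums, driven by Lemma~\ref{mann} and Corollary~\ref{small relations}, with small $n$ (divisors of $840$, $132$, $90$) settled by computer. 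No Galois-theoretic statement about $\Q(\theta)$ is required or proved. Your feeling that the vanishing-sum combinatorics is where the real work must happen is correct; the mistake is believing the Galois bookkeeping that precedes it is routine --- it is both incorrect as stated and, as the paper shows, unnecessary.
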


First we show that this result implies Theorem~\ref{thm intro}.
\begin{proof}[Proof of Theorem~\ref{thm intro}]
The implication $(1)\Rightarrow(2)$ is Proposition~\ref{prop affpq},
the implication $(2)\Rightarrow(3)$ is Proposition~\ref{prop pqspec},
and the implication $(3)\Rightarrow(4)$ is obvious.  Thus it suffices
to prove $(4)\Rightarrow(1)$.  So let $A$ and $B$ be \zo 
circulants with residue sets $\setA:=\{a_1,\dots,a_k\}_n$ and
$\setB:=\{b_1,\dots,b_k\}_n$, where $k\le 3$, and suppose that $AA^T$ and $BB^T$
are similar.  To complete the proof, we must show that $\setA$ and $\setB$
are affinely equivalent.  For $k\le 2$ this was proved in Theorem~\ref{thm 2},
so suppose $k=3$.  By Proposition~\ref{prop SDA3}, it suffices to prove that
$\hat\setA:=\Delta(\setA)$ and $\hat\setB:=\Delta(\setB)$ are linearly
equivalent.  On the other hand, since $AA^T$ and $BB^T$ are similar, they
have the same eigenvalues; recalling their eigenvalues from (\ref{eigen}),
it follows that
\[
\left\{\sum_{1\le i,j\le 3} \zeta^{(a_i-a_j)r}:0\le r<n\right\}
=
\left\{\sum_{1\le i,j\le 3} \zeta^{(b_i-b_j)r}:0\le r<n\right\},
\]
where $\zeta$ is a fixed primitive $n\tth$ root of unity.
Write $\alpha_i:=\zeta^{a_i-a_{i+1}}$ for $1\le i\le 3$, where arithmetic
on indices is done modulo~$3$.  Then the $\alpha_i$ are $n\tth$ roots of unity
with $\prod_{i=1}^3\alpha_i=1$.  Put $\settA:=\{\alpha_1,\alpha_2,\alpha_3,
\bar\alpha_1,\bar\alpha_2,\bar\alpha_3\}$.  Define $\beta_i$ and $\settB$
similarly.  Then the equality of eigenvalues implies that
\[\left\{\sum_{\phi\in\settA}\phi^r:1\le r\le n\right\} =
\left\{\sum_{\psi\in\settB}\psi^r:1\le r\le n\right\}.\]
Now Theorem~\ref{k=3} implies there is an integer $u$ coprime to $n$ such that
$\settA=\{\psi^u:\psi\in\settB\}$.  Since $\settA\cup\{1,1,1\}=\{\zeta^a:a\in
\hat\setA\}$, it follows that $\hat\setA=u\hat\setB$, which as noted above
is sufficient to complete the proof.
\end{proof}

Our proof of Theorem~\ref{k=3} uses the following lemmas.

\begin{lemma}
\label{lcm trick}
Suppose $\settA$ and $\settB$ are multisets of $n\tth$ roots of unity with
$\#\settA=\#\settB$, and
the multisets $\mathcal{E}:=\{\sum_{\phi\in \settA}\phi^r:1\le r\le n\}$ and
$\{\sum_{\psi\in \settB}\psi^r:1\le r\le n\}$ are the same.  Then the
least common multiple $m_{\settA}$ of the orders of the elements of $\settA$
equals the corresponding
$m_{\settB}$.  Moreover, $\mathcal{E}$ consists of $n/m_{\settA}$ copies of
$\{\sum_{\phi\in \settA}\phi^r:1\le r\le m_{\settA} \}$.
\end{lemma}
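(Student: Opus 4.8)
The plan is to recover the integer $m_{\settA}$ directly from the multiset $\mathcal{E}$, so that it visibly depends only on $\mathcal{E}$ and hence must agree with $m_{\settB}$. The quantity I would read off is the multiplicity of the largest element of $\mathcal{E}$. By the triangle inequality, each element $\sum_{\phi\in\settA}\phi^r$ has absolute value at most $\#\settA$, with equality precisely when every $\phi^r=1$. Since $\#\settA=\#\settB$, both families of sums attain the common maximum value $\#\settA$ (for instance at $r=n$), so this maximum value, and its multiplicity in $\mathcal{E}$, are intrinsic to $\mathcal{E}$.

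Next I would count that multiplicity. For $\phi\in\settA$ of order $d_\phi$ (necessarily a divisor of $n$), we have $\phi^r=1$ iff $d_\phi\mid r$; hence $\sum_{\phi\in\settA}\phi^r=\#\settA$ iff $r$ is divisible by the least common multiple of the $d_\phi$, that is, iff $m_{\settA}\mid r$. As $r$ ranges over $\{1,\dots,n\}$ and $m_{\settA}\mid n$, exactly $n/m_{\settA}$ values of $r$ satisfy this. The same argument applied to $\settB$ shows the value $\#\settB=\#\settA$ occurs $n/m_{\settB}$ times in the second multiset. Since the two multisets coincide, $n/m_{\settA}=n/m_{\settB}$, and therefore $m_{\settA}=m_{\settB}$. (This is the same bookkeeping as in Proposition~\ref{lcm_prop}.)

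Finally, for the periodicity claim: every $\phi\in\settA$ has order dividing $m_{\settA}$, so $\phi^{r+m_{\settA}}=\phi^r$ for all $r$, whence the function $r\mapsto\sum_{\phi\in\settA}\phi^r$ is invariant under $r\mapsto r+m_{\settA}$. Because $m_{\settA}\mid n$, letting $r$ run over $\{1,\dots,n\}$ simply repeats the values obtained for $r\in\{1,\dots,m_{\settA}\}$ a total of $n/m_{\settA}$ times, which gives the stated description of $\mathcal{E}$. There is no serious obstacle here; the only point requiring a moment's care is that $m_{\settA}$ divides $n$, which holds because the elements of $\settA$ are $n\tth$ roots of unity, and it is precisely this divisibility that makes the multiplicity count clean.
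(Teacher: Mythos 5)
Your proof is correct and takes essentially the same route the paper does: identify $m_{\settA}$ by counting how many indices $r$ make the sum equal $\#\settA$, then observe that this count ($n/m_{\settA}$) is determined by the multiset $\mathcal{E}$, so it must agree with the count for $\settB$; the periodicity claim then follows from $m_{\settA}\mid n$. One small imprecision: equality in the triangle inequality $\left|\sum_{\phi}\phi^r\right|\le\#\settA$ forces only that all $\phi^r$ coincide, not that they all equal $1$; what you actually use (and use correctly in your second paragraph) is that the sum \emph{equals} the complex number $\#\settA$ if and only if every $\phi^r=1$, which is the right invariant to read off from $\mathcal{E}$.
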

\begin{proof}
This is similar to Propositon \ref{lcm_prop}.
The number $\sum_{\phi \in \settA}\phi^r$ equals $2(\#\settA)$ precisely when $r$
is divisible
by the stated least common multiple $m_\settA$, so the number of such $r$ in
$\{1,2,\dots,n\}$ equals $n/m_\settA$.  Since $\#\settA=\#\settB$, it follows that
$n/m_\settA=n/m_\settB$ and thus $m_\settA=m_\settB$.  The final assertion is
obvious.
\end{proof}

\begin{lemma}
\label{conjugation}
Suppose $\{\alpha_1,\dots,\alpha_k\}$ is a set of roots of unity
which includes two complex conjugate roots of unity.  Let $m$ be the least
common multiple of the orders of the various $\alpha_i/\alpha_j$.  Then every
$\alpha_i$ has order dividing $2m$.
\end{lemma}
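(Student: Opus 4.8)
The plan is to exploit the fact that the $\alpha_i$ all differ from one another by roots of unity of order dividing $m$, so the multiset is, up to a single overall scalar $\lambda$, supported on the $m\tth$ roots of unity; the extra hypothesis that two of the $\alpha_i$ are complex conjugates will pin down the order of $\lambda$. Concretely, fix an index, say $i=1$, and write $\alpha_j=\alpha_1\gamma_j$ where $\gamma_j:=\alpha_j/\alpha_1$ has order dividing $m$ by the definition of $m$. So every $\alpha_j$ lies in $\alpha_1\mu_m$, where $\mu_m$ denotes the group of $m\tth$ roots of unity. It remains only to bound the order of $\alpha_1$ itself.

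For this, suppose $\alpha_s$ and $\alpha_t$ are complex conjugates, so $\alpha_s\bar\alpha_s=1$, i.e.\ $\alpha_s\alpha_t=1$. Writing $\alpha_s=\alpha_1\gamma_s$ and $\alpha_t=\alpha_1\gamma_t$ gives $\alpha_1^2\gamma_s\gamma_t=1$, so $\alpha_1^2=(\gamma_s\gamma_t)^{-1}\in\mu_m$. Hence $\alpha_1^2$ has order dividing $m$, which means $\alpha_1$ has order dividing $2m$. Since every other $\alpha_j=\alpha_1\gamma_j$ with $\gamma_j\in\mu_m$, and $\mu_m\subseteq\mu_{2m}$, each $\alpha_j$ is a product of two elements of $\mu_{2m}$ and therefore lies in $\mu_{2m}$, i.e.\ has order dividing $2m$. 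That is exactly the claim.

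I do not expect any serious obstacle here: the argument is a short manipulation once one observes that the hypothesis about conjugates is really the relation $\alpha_s\alpha_t=1$, which combined with $\alpha_s/\alpha_1,\alpha_t/\alpha_1\in\mu_m$ forces $\alpha_1^2\in\mu_m$. The only point to be slightly careful about is to make sure the conclusion is stated for \emph{all} $\alpha_i$, not just for $\alpha_1$ and its conjugate partner; but this follows immediately because $2m$ is divisible by $m$, so translating $\mu_m$ by the element $\alpha_1$ (whose order divides $2m$) keeps everything inside $\mu_{2m}$. If one prefers, the same computation shows $\alpha_i\alpha_s\in\mu_m\cdot\mu_m=\mu_m$ and $\alpha_s$ has order dividing $2m$, so $\alpha_i$ has order dividing $2m$ as well; either phrasing works, and both are routine.
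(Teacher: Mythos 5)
Your proof is correct and follows essentially the same route as the paper's: the key observation in both is that a conjugate pair of roots of unity satisfies $\alpha_s\alpha_t=1$ (i.e.\ $\bar\alpha=1/\alpha$), so the product of two ratios $\alpha_s/\alpha_j$, $\alpha_t/\alpha_j$ is $1/\alpha_j^2\in\mu_m$, giving $\alpha_j\in\mu_{2m}$, and the remaining $\alpha_i$ follow since $\alpha_i/\alpha_j\in\mu_m\subseteq\mu_{2m}$. The paper applies this directly to one element of the conjugate pair ($\alpha_i^2=\alpha_i\bar\alpha_\ell=\alpha_i/\alpha_\ell$) rather than to a fixed reference index $\alpha_1$, but this is only a cosmetic difference.
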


\begin{proof}
If $\alpha_i=\bar\alpha_k$ then $\alpha_i^2=\alpha_i\bar\alpha_k=
\alpha_i/\alpha_k$
is an $m\tth$ root of unity, so $\alpha_i$ is a $(2m)\tth$ root of unity.
Since
every $\alpha_i/\alpha_j$ is an $m\tth$ root of unity, it follows that every
$\alpha_j$ has order dividing $2m$.
\end{proof}

We now prove our main result.

\begin{proof}[Proof of Theorem \ref{k=3}]
Suppose the multisets $\settA$ and $\settB$ provide a counterexample.
By Lemma~\ref{lcm trick}, we may assume that $n$ is the least common multiple
of the orders of the elements of $\settA$, and also that $n$ is the
corresponding least common multiple for $\settB$.  By hypothesis, the sum
of the elements of $\settA$
equals the sum of the $r\tth$ powers of the elements of $\settB$, for some $r$.
Thus $\sum_{a\in A} a + \sum_{b\in B} (-b^r) = 0$ is a vanishing sum of twelve
roots of unity.  For notational convenience, we will write this vanishing
sum as $\sum_{j=1}^3 (\gamma_j+\bar\gamma_j-\delta_j-\bar\delta_j)$, where
$\prod\gamma_j=1=\prod\delta_j$ and where moreover all $\gamma_j$'s and
$\delta_j$'s are $n\tth$ roots of unity and $n$ is the least common multiple
of the orders of either the $\gamma_j$'s or the $\delta_j$'s.
Note that multiplying the sum by $-1$ has the affect of switching the
$\gamma_j$'s and $\delta_j$'s.  In what follows, we implicitly use this
symmetry, as well as possibly relabeling the $\gamma_j$'s and $\delta_j$'s
or replacing all the $\gamma_j$'s (or $\delta_j$'s) by their complex
conjugates.

First we treat some small values of $n$, namely the values
$n\in\{ 840,$ $132, 90\}$ and their divisors.  A simple
{\sc MAGMA} program verifies the result in these cases.

Henceforth assume $n$ does not divide $840$, and consider a minimal
vanishing subsum
which includes a root of unity whose order does not divide $420$.
First suppose this subsum includes two complex conjugate roots of unity.
Let $m$ be the least common multiple of the orders of
the ratios of roots of unity involved in the subsum.
By Lemma~\ref{conjugation},
every root of unity in the subsum has order dividing $2m$.
By Lemma~\ref{mann},
$m$ divides either $2\cdot 3\cdot 5\cdot 7=210$ or $2\cdot 3\cdot 11=66$, so
we must have $m\mid 66$ and $11\mid m$.  Since $11\mid m$, Lemma~\ref{mann}
implies that the subsum has weight twelve, and all twelve roots of unity have
order dividing $132$, so $n\mid 132$, a case which was treated by our MAGMA
program.

Thus any root of unity in our vanishing sum whose order
does not divide $420$ must be contained in a minimal vanishing subsum which
does not include two complex conjugates, and hence has weight at most six.
Consider one such subsum.

If the weight is six
then the sum includes one element from each pair $(\gamma_j,\bar\gamma_j)$ and
one from each pair $(-\delta_j,-\bar\delta_j)$.
By Lemma~\ref{mann}, every element of the sum is a $30\tth$
root of unity times some fixed constant $c$.  Since $\prod\gamma_j=1$, we
see that
either $c$ or $c^3$ is a $30\tth$ root of unity.  Thus $n\mid 90$, a case
which was treated by our {\sc MAGMA} program.

If the weight is five
then we may assume the sum includes one element from each pair
$(\gamma_j,\bar\gamma_j)$
and one from $(-\delta_1,-\bar\delta_1)$ and one from
$(-\delta_2,-\bar\delta_2)$.
By Corollary \ref{small relations}, the roots of unity in this sum are fifth
roots
of unity times one another.  Since $\prod\gamma_j=1$, we see as above that the
$\gamma_j$ are $15\tth$ roots of unity, so $\delta_1$ and $\delta_2$ are
$30\tth$
roots of unity.  Thus $\delta_3=1/(\delta_1\delta_2)$ has order dividing $30$.
But $\delta_3+\bar\delta_3=0$ implies $\delta_3=-\bar\delta_3=-1/\delta_3$, so
$\delta_3^2=-1$, a contradiction.

There are no minimal vanishing sums of weight four, by
Corollary~\ref{small relations}.

Suppose the weight is three.  By Corollary \ref{small relations}, the three
roots of unity
in the sum are cube roots of unity times one another (and all three are
distinct).
Since the sum involves an element of order not dividing $420$, it follows that
all three roots of unity in the sum have order not dividing $420$.
If the sum includes only elements of the form $\gamma_j^{\pm 1}$, then since
$\prod\gamma_j=1$ we see that the $\gamma_j$'s are cube roots of unity, a
contradiction.
Thus, without loss we may assume the sum is
$\gamma_1+\gamma_2^{\pm 1}-\delta_1$.
Since the order of $-\delta_1$ does not divide $420$, and $\delta_2\delta_3=
1/\delta_1$,
we may assume the order of $\delta_2$ does not divide $420$.  Thus the minimal
vanishing subsum involving $-\delta_2$ does not include any complex conjugates,
and has weight at most three.  The possibilities are
\begin{enumerate}
\item[$(1)$] $\gamma_3^{\pm 1}-\delta_2-\delta_3^{\pm 1}=0$
\item[$(2)$] $-\delta_2-\delta_3^{\pm 1}=0$
\item[$(3)$] $\gamma_3^{\pm 1}-\delta_2=0$.
\end{enumerate}
In case (2) we have $\gamma_3+\bar\gamma_3=0$, so $\gamma_3=\pm \imag$ has
order $4$.
In cases (1) and (3), $\gamma_3^{\pm 1}$ is a sixth root of unity times
$\delta_2$,
so the order of $\gamma_3$ does not divide $420$.  Thus in every case
$\gamma_3$ is
not a cube root of unity, so $\bar\gamma_2$ is not a cube root of unity times
$\gamma_1$.

Hence the minimal vanishing subsum involving $\gamma_1$ is
$\gamma_1+\gamma_2-\delta_1$, so $\gamma_2=\omega\gamma_1$ and
$\delta_1=-\omega^2\gamma_1$ where $\omega$ is a primitive
cube root of unity.  Thus $\gamma_3=\omega^2/\gamma_1^2$.

In case (2) we have $\delta_2=-\delta_3^{\pm 1}$ and $\gamma_3=\pm \imag$, so
$\gamma_1$
and $\gamma_2$ have order $24$ while $\delta_1$ has order $8$, so we must have
$\delta_2=-\delta_3$ and thus $\delta_2$ and $\delta_3$ have order $16$.
But then $\#\langle\gamma_1,\gamma_2,\gamma_3\rangle=24$ and
$\#\langle\delta_1,\delta_2,\delta_3\rangle=16$,
which is a contradiction since neither of $16$ or $24$ divides the other.
In case (3) we have $\delta_2=\gamma_3^{\pm 1}$ and $\delta_3=\pm \imag$,
so $\delta_2=1/(\delta_1\delta_3)=\pm \imag\omega/\gamma_1$; since
$\gamma_3=\omega^2/\gamma_1^2$,
it follows that the order of $\gamma_1$ divides $12$, a contradiction.
So suppose we are in case (1).  Since $\delta_1=1/(\delta_2\delta_3)$ is not a
cube root of unity, it follows that $\delta_2$ is not a cube root of unity
times
$\bar\delta_3$, so we must have $\gamma_3^{\pm 1}-\delta_2-\delta_3=0$.  By
Corollary \ref{small relations},
$\delta_2\delta_3=\gamma_3^{\pm 2}=(\omega/\gamma_1^4)^{\pm 1}$, but also
$\delta_2\delta_3=1/\delta_1=-\omega/\gamma_1$, so the order of $\gamma_1$
divides $30$, a contradiction.

We have shown that every summand whose order does not divide $420$ is involved
in a minimal
vanishing sum of weight two.  Suppose $\{\gamma_j,\bar\gamma_j:1\le j\le 3\}=
\{\delta_j,\bar\delta_j:1\le j\le 3\}$.  From the definition of the $\gamma_j$
and $\delta_j$, it follows that $\{\alpha_j,\bar\alpha_j:1\le j\le 3\}=
\{\beta_j^r,\bar\beta_j^r:1\le j\le 3\}$.  In particular, the least common
multiple of the orders of the $\beta_j^r$ equals the corresponding least common
multiple for the $\alpha_j$, which we know equals the corresponding least
common multiple for the $\beta_j$, namely $n$.
Thus $r$ is coprime to $n$, contradicting our assumption that the $\alpha_j$
and $\beta_j$ are a counterexample to the desired result.

Next suppose that $\gamma_1=-\gamma_2$ and the order of $\gamma_1$ does not
divide $840$.
Then $\gamma_3=-1/\gamma_1^2$ has order not dividing $420$,
so we may assume $\gamma_3=\delta_3$.  Next, $\delta_1\delta_2=1/\delta_3$ has
order not
dividing $420$, so we may assume $\delta_1$ has order not dividing $420$,
whence
$\delta_1=-\delta_2^{\pm 1}$.  We do not have $\delta_1=-1/\delta_2$, since
that would imply $\delta_3=-1$.  Thus $\delta_1=-\delta_2$, so
$-1/\gamma_1^2=\gamma_3=\delta_3=-1/\delta_1^2$, whence $\gamma_1=\pm\delta_1$.
Thus $\{\gamma_1,\gamma_2,\gamma_3\}=\{\delta_1,\delta_2,\delta_3\}$,
and we have already achieved a contradiction in this situation.

Suppose that $\gamma_1=\delta_1$ and the order of $\gamma_1$ does not
divide $840$.
Since $\gamma_1\gamma_2\gamma_3=1$, we may assume the order of $\gamma_2$ does
not divide $840$ as well.  Thus, after possibly switching $\delta_2$ and
$\delta_3$,
we must have either $\gamma_2=\delta_2^{\pm 1}$ or $\gamma_2=-\bar\gamma_3$.
If $\gamma_2=\delta_2^{\pm 1}$, then
$\gamma_3+\bar\gamma_3-\delta_3-\bar\delta_3$
is a vanishing sum, but there are no minimal
vanishing sums of weights four or one so we must have
$\gamma_3=\delta_3^{\pm 1}$,
which is a case we have already handled.  Thus $\gamma_2=-\bar\gamma_3$, so
$\gamma_1=-1$, contradiction.

Finally, we may assume that the order of $\alpha_1$ does not divide $840$.
The minimal vanishing subsum involving $\alpha_1$ must have weight two, and is
not of the form
$\alpha_1+\alpha_j$ or $\alpha_1-\beta_j^{\pm r}$.  The only remaining
possibility is
$\alpha_1+\bar\alpha_j$; here we know $j\ne 1$, so we may assume $j=2$,
whence $\alpha_3=-1$.
Switching the roles of $\alpha_i$ and $\beta_i$,
we may also assume that $\beta_2=-\bar\beta_1$ and $\beta_3=-1$.
Since $\alpha_2=-\bar\alpha_1$,
at least one of $\alpha_1$ and $\alpha_2$ has even order, so by possibly
switching $\alpha_1$ and $\alpha_2$ we may assume that $\alpha_1$ has
even order. 
Then $n=\#\langle\alpha_1,\alpha_2,\alpha_3\rangle=\#\langle\alpha_1\rangle$.
Similarly we may assume that $\beta_1$ has even order, so $\beta_1$ has
order $n$.
Thus there is an $r$ coprime to $n$ such that $\alpha_1=\beta_1^r$, and it
follows that $\alpha_2=\beta_2^r$ and $\alpha_3=\beta_3^r$.  This again is a
contradiction, and as we have now treated every case it follows that the
supposed counterexample does not exist.
\end{proof}

In the above argument, we used that all the eigenvalues of $AA^T$ and $BB^T$
are the same.  One can actually obtain a lot of information from just the
hypothesis that these matrices have a single eigenvalue in common.  Namely,
by expanding on the above argument, one can show:

\begin{proposition}
If $\alpha_1,\alpha_2,\alpha_3,\beta_1,\beta_2,\beta_3$ are roots of unity
with $\prod \alpha_i=\prod \beta_i =1$ and $\sum (\alpha_i+\bar\alpha_i)=
\sum (\beta_i+\bar\beta_i)$, then the multisets
$\settA=\{\alpha_1,\alpha_2,\alpha_3,\bar\alpha_1,\bar\alpha_2,\bar\alpha_3\}$
and
$\settB=\{\beta_1,\beta_2,\beta_3,\bar\beta_1,\bar\beta_2,\bar\beta_3\}$ satisfy
one of the following, with
$\omega^3=1$,$\imag^4=1$,$\phi^5=1$,$\sigma^7=1$,$\mu^8=1$,$\nu^{16}=1$
being roots of unity with the orders indicated.
\begin{enumerate}
\item $\settA=\settB$.
\item $\settA=\{\alpha,-\bar\alpha,-1,\bar\alpha,-\alpha,-1\}$ and
$\settB=\{\beta,-\bar\beta,-1,\bar\beta,-\beta,-1\}$.
\item After possibly switching $\settA$ and $\settB$ we have one of the following
\begin{enumerate}
\item $\settA=\{\mu,-\mu,-\bar\mu^2,\bar\mu,-\bar\mu,-\mu^2\}$ and
$\settB=\{\omega,\omega^2,1,\omega^2,\omega,1\}$
\item $\settA=\{\phi,\phi^2,\bar\phi^3,\bar\phi,\bar\phi^2,\phi^3\}$ and \\
$\settB=\{\omega,-\omega\phi^2,-\omega\bar\phi^2,\omega^2,-\omega^2\bar\phi^2,
-\omega^2\phi^2\}$
\item $\settA=\{\omega,\sigma^3\omega,\bar\sigma^3\omega,\omega^2,
\bar\sigma^3\omega^2,\sigma^3\omega^2\}$ and \\
$\settB=\{-\sigma\omega,-\sigma\omega^2,\bar\sigma^2,
-\bar\sigma\omega^2,-\bar\sigma\omega,\sigma^2\}$
\item $\settA=\{\nu,-\nu,-\bar\nu^2,\bar\nu,-\bar\nu,-\nu^2\}$ and \\
$\settB=\{\omega\nu^2,\omega^2\nu^2,\bar\nu^4,\omega^2\bar\nu^2,
\omega\bar\nu^2,\nu^4\}$
\item $\settA=\{\omega,\omega\phi^2,\omega\bar\phi^2,\omega^2,
\omega^2\bar\phi^2,\omega^2\phi^2\}$ and \\
$\settB=\{\phi,\imag\phi^2,-\imag\bar\phi^3,\bar\phi,
-\imag\bar\phi^2,\imag\phi^3\}$
\item both $\settA$ and $\settB$ are among the multisets (with sum $-1$)
\begin{enumerate}
\item $\{\sigma,\sigma^2,\bar\sigma^3,\bar\sigma,\bar\sigma^2,\sigma^3\}$;
  % all elts of order 7
\item $\{\imag\omega,-\imag\omega,\omega,-\imag\omega^2,\imag\omega^2,
 \omega^2\}$;
  % all elts of order 3 or 12
\item $\{-\omega\phi,-\omega^2\phi,\bar\phi^2,-\omega^2\bar\phi,
-\omega\bar\phi,\phi^2\}$.
\end{enumerate}
\end{enumerate}
\end{enumerate}
\end{proposition}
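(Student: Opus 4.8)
The plan is to mimic the argument in the proof of Theorem~\ref{k=3}, but instead of deriving a contradiction we keep track of exactly which configurations survive. As there, the hypothesis $\sum(\alpha_i+\bar\alpha_i)=\sum(\beta_i+\bar\beta_i)$ gives a vanishing sum of twelve roots of unity $\sum_{j=1}^3(\gamma_j+\bar\gamma_j-\delta_j-\bar\delta_j)=0$ (here $\gamma_j=\alpha_j$, $\delta_j=\beta_j$), and after normalizing by Lemma~\ref{lcm trick} we may assume $n$ is simultaneously the lcm of the orders of the $\gamma_j$'s and of the $\delta_j$'s, and we retain the same symmetries (negating the whole sum swaps $\gamma$'s with $\delta$'s, and we may relabel within each triple or conjugate an entire triple). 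First I would dispose of small $n$ by a {\sc MAGMA} computation, enumerating all order-$3$ multisets and recording which pairs satisfy the spectral condition; this is where cases (3)(a)--(f), which involve fixed roots of unity of orders dividing $2^4\cdot3^2\cdot5\cdot7$, get identified and verified. For the remaining $n$, I would run the case analysis of Theorem~\ref{k=3} essentially verbatim, decomposing the twelve-term sum into minimal vanishing subsums and using Lemma~\ref{mann}, Corollary~\ref{small relations}, and Lemma~\ref{conjugation} to bound the orders involved.

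The bookkeeping is the substance of the argument. When every summand whose order divides $n$ fails to divide $840$ forces a weight-two minimal subsum, the same dichotomy as before arises: either $\{\gamma_j,\bar\gamma_j\}=\{\delta_j,\bar\delta_j\}$, or we land in one of the $\gamma_1=-\gamma_2$, $\gamma_1=\delta_1$, or $\alpha_1+\bar\alpha_2$ sub-cases. In Theorem~\ref{k=3} each of these either reproduces $\{\gamma_j\}=\{\delta_j\}$ (hence $\settA=\settB$, giving outcome (1) here) or leads to a contradiction \emph{using the assumption that the $\alpha_j$ and $\beta_j$ are a counterexample}, i.e.\ that the exponent $r$ relating them is not coprime to $n$. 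In the present proposition there is no such exponent, so what were contradictions there become the genuine surviving families here: the $\alpha_3=-1=\beta_3$, $\alpha_2=-\bar\alpha_1$, $\beta_2=-\bar\beta_1$ branch yields outcome (2), while the branches in which a minimal subsum has weight $3,4,5,$ or $6$ and forces small orders yield the configurations of outcome (3). So the work is to rerun each branch of the earlier proof and, wherever it said ``contradiction,'' instead solve the resulting system of equations explicitly for $\settA$ and $\settB$ and match it against the list.

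Concretely, I would proceed branch by branch: (i) the weight-six minimal subsum branch gives all summands equal to a $30\tth$ root of unity times a constant with $c$ or $c^3$ a $30\tth$ root, and sorting out the possible $c$ against $\prod\alpha_j=\prod\beta_j=1$ produces (3)(b), (3)(c), (3)(e), and the family (3)(f); (ii) the weight-five branch, with fifth roots of unity and the resulting constraint $\delta_3^2=-1$, combined with $\delta_3=-1/\delta_3$, pins down (3)(b) or its relatives; (iii) the weight-three branches, through the sub-cases (1),(2),(3) of the earlier proof, give the systems whose solutions include (3)(a) (the $\mu$-of-order-$8$ solution, coming from ``$\gamma_3=\pm\imag$'') and (3)(d) (the $\nu$-of-order-$16$ solution); (iv) the weight-two endgame, as above, gives (1) and (2). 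The main obstacle will be the sheer case volume together with the need to be sure the explicit list (3)(a)--(f) is \emph{exhaustive}: every place the proof of Theorem~\ref{k=3} invoked ``this $n$ was treated by {\sc MAGMA}'' must now be reexamined to extract the actual solution multisets rather than merely certifying consistency, and one must confirm no two listed families coincide after the allowed symmetries and no further family has been overlooked. I expect this verification---especially ruling out that some small-$n$ {\sc MAGMA} case hides an extra configuration---to be the delicate part, handled by making the {\sc MAGMA} search output the full list of surviving multisets and checking it reduces, under the symmetry group, exactly to items (1),(2),(3)(a)--(f).
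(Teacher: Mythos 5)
The paper itself does not supply a proof of this proposition; it simply asserts that it follows ``by expanding on the above argument,'' i.e.\ the proof of Theorem~\ref{k=3}. Your overall plan---replay that case analysis and harvest solutions at the spots where the original derived a contradiction---is exactly the approach the authors intend, and you correctly identify the relevant tools (Lemma~\ref{mann}, Corollary~\ref{small relations}, Lemma~\ref{conjugation}) and correctly diagnose the weight-$2$ endgame: the branch $\gamma_1=-\gamma_2$ closes with $\{\gamma_j\}=\{\delta_j\}$ and yields outcome~(1), and the branch $\alpha_2=-\bar\alpha_1$, $\alpha_3=-1$, $\beta_2=-\bar\beta_1$, $\beta_3=-1$ yields outcome~(2). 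You also rightly flag exhaustiveness of the list as the delicate point.

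There are, however, two specific places where your bookkeeping goes off the rails. First, you propose to ``normalize by Lemma~\ref{lcm trick}'' so that the lcm of the orders of the $\gamma_j$ equals that of the $\delta_j$; but that lemma uses the \emph{full} coincidence of eigenvalue multisets, and the proposition's hypothesis is a single eigenvalue equality. Indeed the proposition visibly contains solutions that violate that conclusion: in~(3)(a) the lcm for $\settA$ is $8$ and for $\settB$ is $3$, and in~(3)(d) the lcms are $16$ and $24$. Dropping this constraint is precisely what rescues, e.g., case~(2) of the weight-three subanalysis, where Theorem~\ref{k=3} derived the contradiction ``$\#\langle\gamma_j\rangle=24\ne 16=\#\langle\delta_j\rangle$''; without the shared-lcm constraint that configuration survives and becomes~(3)(d). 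Second, your attribution of~(3)(b)--(f) to the weight-five and weight-six branches is wrong. In the weight-five case the identity $\prod\gamma_j=1$ forces all $\gamma_j$ to be $15\tth$ roots of unity, so a summand of order not dividing $420$ cannot occur in that subsum, and the contradiction $\delta_3^2=-1$ is a genuine order-arithmetic impossibility---not one that depends on the assumption of a counterexample. Likewise, weight six forces everything into the $n\mid 90$ regime. So \emph{all} of~(3)(a)--(f) arise from enumerating small orders, and that enumeration must extend past the divisors of $840$, $132$, and $90$ used in Theorem~\ref{k=3}: case~(3)(d) alone involves a primitive $16\tth$ root of unity, and $16$ divides none of those, so the computational step is genuinely larger here and cannot simply reuse the runs of the earlier {\sc MAGMA} check. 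Once you correct these two points---no lcm normalization, and realizing that the interesting families all come from a fresh small-order search plus the one non-contradiction in weight-three case~(2)---your proposal aligns with what the paper hints at.
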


\par\noindent
{\bf Note.} In the above proposition,
there is a solution which uses case 3.f.iii for both $\settA$ and $\settB$
but with $\phi$ a different primitive fifth root of unity in $\settB$
than $\settA$. This is the only case where we need different choices
of
$\omega, \imag, \phi, \sigma, \mu$ and $\nu$
in $\settA$ and $\settB$.

%#######################################################################
%#######################################################################

\section{The cases $k=4$ and $k=5$}
\label{sec 4}

New phenomena occur when we move to $k=4$.
For instance, in Section~\ref{sec equiv} we showed that
the residue sets $\{0,1,4,7\}_8$ and $\{0,1,3,4\}_8$ 
correspond to \zo circulants which are \pq\ equivalent
but not affinely equivalent.  Also, one can show that the
autocorrelation matrices of the circulants corresponding to
$\{0,1,2,6\}_{12}$ and $\{0,2,3,6\}_{12}$ are similar, but are
not conjugate via a permutation matrix (so the circulants are
not \pq\ equivalent).
% Proof: it's easy to check (e.g. by magma) that the autocorrelation
% matrices are similar (over the rational numbers).  If they were
% similar by a permutation matrix P, then for every x in Z/12 we
% would have the equality of multisets {P^{-1}(P(x)-a):a in A}={x-b:b in B}
% where A,B are the multisets corresponding to the autocorrelation
% matrices; specifically, A={0^4,1^2,2,4,5,6^2,7,8,10,11^2} and
% B={0^4,1,2,3^2,4,6^2,8,9^2,10,11}.  Since #A=#B, the equality of
% multisets implies a corresponding equality when we restrict to elements
% having a prescribed multiplicity.  In particular, for multiplicity zero
% we get  {P^{-1}(P(x)+-3)} = {x+-5}.  Now pick x,y with P(y)=P(x)+6
% (so necessarily x,y are distinct).  Then {P(x)+-3}={P(y)+-3},
% so {x+-5}={y+-5}, which isn't possible for distinct x,y. QED
These examples show that, when $k=4$, condition (2) of Theorem~\ref{thm intro}
does not imply condition (1), and condition (4) does not imply condition (3).

Still, for $k=4$ and $k=5$ we now show that Theorem~\ref{thm intro}
is true whenever $n$ is not divisible by small primes.
As in the case $k=3$, we proceed by proving $\SDA(n,k)$ and then
examining vanishing sums of roots of unity.

\begin{lemma}
\label{lem 4}
$\SDA(n,4)$ is true if $n$ is odd.
$\SDA(n,5)$ is true if $n$ is coprime to $10$.
\end{lemma}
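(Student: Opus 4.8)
The plan is to prove both assertions by the circle-geometry method of Proposition~\ref{prop SDA3}, extended from three points to four and five. Throughout it is enough to establish the substantive direction: if $\Delta(\setA)=\Delta(\setB)$ then $\setA$ and $\setB$ are affinely equivalent (in fact, as in the remark after Proposition~\ref{prop SDA3}, with multiplier $\pm1$). Identify $\Z/n\Z$ with $n$ equally spaced points on a circle of circumference $n$, so that $\setA,\setB$ become $k$-point configurations and the nonzero entries of $\Delta(\setA)$ are exactly the arc-lengths, in both directions, of the $\binom{k}{2}$ chords of $\setA$. Write $(g_1,\dots,g_k)$, with $g_i>0$ and $\sum g_i=n$, for the cyclic gap sequence of $\setA$. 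Each chord value is then $\pm$ a sum of consecutively-indexed gaps, and a run of length $j$ together with its complementary run of length $k-j$ describes the two arcs of a single chord. Hence for $k=4$ the twelve nonzero entries of $\Delta(\setA)$ are $\pm g_1,\pm g_2,\pm g_3,\pm g_4$ (the four ``edges'') and $\pm(g_1+g_2),\pm(g_2+g_3)$ (the two ``diagonals''), while for $k=5$ they are $\pm g_i$ and $\pm(g_i+g_{i+1})$ for $i\in\Z/5\Z$ (five edges and five diagonals).

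The task is to recover the cyclic gap sequence, up to rotation and reflection, from the multiset of chord values alone. The role of the hypotheses on $n$ is to kill the degeneracies that power the counterexamples: when $n$ is odd, no nonzero residue equals its own negative, so the $k(k-1)$ nonzero entries of $\Delta(\setA)$ genuinely split into $\binom{k}{2}$ honest antipodal pairs --- this is exactly what fails for $\{0,1,4,7\}_8$ against $\{0,1,3,4\}_8$, where the diagonal $g_1+g_2=4$ is its own negative --- and for $k=5$ the extra assumption $5\nmid n$ excludes configurations supported on (a coset of) the subgroup of order $5$, whose difference multisets are too degenerate to pin down the set. Concretely, for $k=4$ I would first separate the four edge values from the two diagonals among the six chord values (generically the edges are the unique four of the six that can be signed so as to sum to $n$; the non-generic cases force extra linear relations among the $g_i$), and then observe that the two diagonal values determine which of the three cyclic arrangements of $\{g_1,g_2,g_3,g_4\}$ occurs, so that the configuration is forced. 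For $k=5$ the same scheme applies with a longer list of subcases (and it may be convenient to pass to $\Z/m\Z$ for a divisor $m$ of $n$, reducing some subcases to $\SDA(m,4)$ or $\SDA(m,3)$, or to finish off the finitely many small surviving $n$ directly).

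The main obstacle is the case analysis concealed in the previous paragraph: cataloguing the ways in which the edge- and diagonal-values of a $4$- or $5$-point configuration can coincide with one another or be antipodal --- for instance $g_i=g_j$, or $2g_i+g_j=n$, or $g_i+g_{i+1}=g_j$ --- and checking in each case that the reconstruction of the gap sequence is forced. This is precisely where the arithmetic does the work: each putative ambiguity, traced through, yields a linear relation on the $g_i$ that is incompatible with $n$ odd (respectively with $\gcd(n,10)=1$). It should be cleanest to organize the bookkeeping by the multiplicity profile of $\Delta(\setA)$, since high multiplicity forces many of these coincidences at once and so constrains the configuration severely.
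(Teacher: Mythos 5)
Your overall plan — reconstruct the cyclic gap sequence $(g_1,\dots,g_k)$ from the multiset of signed chord lengths, reducing $\SDA(n,k)$ to a finite case analysis on how the chord values can collide — is a genuinely different route from the paper, which instead writes down the system of equations
$x_{\min A}-x_{\max A}=\sigma(A)\bigl(y_{\min\pi(A)}-y_{\max\pi(A)}\bigr)$ for each $2$-subset $A$, ranges by computer over all $(\pi,\sigma)$, and verifies that the affine-equivalence relations are rational linear combinations of these with denominators supported only on the primes $2,13$ (for $k=4$) resp.\ $2,5$ (for $k=5$). That bookkeeping is what gives the stated hypotheses on $n$.

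There is, however, a concrete error in your proposal, at the one step you did try to make precise. You claim that ``generically the edges are the unique four of the six [chord values] that can be signed so as to sum to $n$.'' Already at $k=4$ this is never true: besides $g_1+g_2+g_3+g_4\equiv 0$ there is the universal identity
\[
g_1 - g_3 - (g_1+g_2) + (g_2+g_3) = 0,
\]
so $\{g_1,g_3,\,g_1+g_2,\,g_2+g_3\}$ is a second $4$-subset of the six chord values admitting a vanishing signed sum, for every configuration, not just the degenerate ones. Thus the dichotomy ``generic case $=$ unique $4$-subset / non-generic case $=$ extra relations on the $g_i$'' collapses: every configuration falls on the ``non-generic'' side, and the ``extra relation'' you would detect is the tautological one above, which constrains nothing. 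This kills the edge-vs-diagonal separation in the form you stated it. Beyond that, you defer the actual case analysis --- which you rightly call the main obstacle --- and give no mechanism that would produce the specific arithmetic thresholds ($n$ odd for $k=4$, $\gcd(n,10)=1$ for $k=5$). Your heuristic explanations of those thresholds are also off: the $k=4$ counterexample at $n=8$ is not simply a matter of the diagonal $4$ being self-antipodal (that value appears in both $\Delta(\{0,1,4,7\}_8)$ and $\Delta(\{0,1,3,4\}_8)$, yet the obstruction lies deeper, in the coincidence $g_1+g_2=g_2+g_3$ for the second set), and for $k=5$ a coset of the order-$5$ subgroup is affinely rigid and cannot by itself furnish a counterexample, so ``excluding configurations supported on the $5$-torsion'' is not the right picture. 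The hypotheses on $n$ in the paper come out of tracking denominators in the linear algebra, and some surrogate for that computation would need to appear in any completed version of your argument.
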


\begin{proof}
Suppose $X:=\{x_1,x_2,x_3,x_4\}$ and $Y:=\{y_1,y_2,y_3,y_4\}$ are 4-element
subsets of $\Z/n\Z$ with $\Delta(X)=\Delta(Y)$.
Then there is a permutation $\pi$ of the set $T$ of order-$2$ subsets of
$\{1,2,3,4\}$, and a map $\sigma:T\to\{1,-1\}$, such that
\begin{equation}
\label{xyeq}
x_{\min(A)} - x_{\max(A)} = \sigma(A) (y_{\min(\pi(A))} - y_{\max(\pi(A))})\quad
\text{for every $A\in T$.}
\end{equation}

We tested via computer that, for every choice of $\pi$ and $\sigma$,
the above identity implies that $X$ and $Y$ are affinely equivalent
so long as $n$ is odd.  In fact, if we think of the $x_i$'s and $y_i$'s as
indeterminates, then there exist $\rho\in S_4$ and $c\in\{1,-1\}$ such that,
for each $1\le i\le 3$, the equation
\begin{equation}
\label{i4}
x_i-x_4 = c(y_{\rho(i)} - y_{\rho(4)})
\end{equation}
is a rational linear combination of the equations (\ref{xyeq})
(for any fixed choice of $\pi$ and $\sigma$).  Moreover, the denominators of
the coefficients in this combination have no prime factors besides $2$ and $13$.
This proves $\SDA(n,4)$ when $n$ is coprime to $26$.  Values of $n$ which are
odd multiples of $13$ require an additional argument: for such $n$, consider
a pair $(\pi,\sigma)$ for which the above linear combination has a coefficient
with denominator divisible by $13$.  It turns out that $13(x_i-x_j)$ and
$13(y_i-y_j)$ are $\Z$-linear combinations of the equations (\ref{xyeq}),
for every $1\le i,j\le 4$, so viewing $x_i,y_i$ as members of $\Z/nZ$, it follows
that all the $x_i$'s are congruent to one another mod~$n/13$, and likewise the
$y_i$'s.
We can translate the $x_i$'s and $y_i$'s so that (say) $x_1=y_1=0$, without
affecting the pairwise differences between $x_i$'s or $y_i$'s.  Thus we may assume
that every $x_i$ and $y_i$ is divisible by $n/13$, so it suffices to prove that
$\{x_i/(n/13)\}$ and $\{y_i/(n/13)\}$ are affinely equivalent subsets of $\Z/13\Z$.
In each case, it turns out that (\ref{xyeq}) allows one to write every $x_i$
and $y_i$ as a multiple of $y_4$, so we get two explicit subsets of $\Z/13\Z$ and
they do indeed turn out to be affinely equivalent.

We treated the case $k=5$ in a similar manner.  In this case it turns out 
that there is always a system of equations like (\ref{i4}) which are rational
linear combinations of the equations like (\ref{xyeq}), and the denominators
of the coefficients in these combinations are not divisible by any primes
besides $2$ and $5$.
\end{proof}

\begin{remark}
Experimentally, it seems that $\SDA(n,4)$ fails if and only if $n$ is
divisible by $8$, and that $\SDA(n,5)$ fails if and only if $n>8$ and
$\gcd(n,10)>1$.  However, such refinements of Lemma~\ref{lem 4} would
not affect our next result.
\end{remark}

\begin{theorem} If $k\in\{4,5\}$ and every prime factor of $n$ is greater
than $2k(k-1)$, then the properties $(1)$, $(2)$, $(3)$, $(4)$ from
Theorem~\ref{thm intro}
are equivalent conditions on $n$-by-$n$ \zo circulants $A$ and $B$ of weight~$k$.
\end{theorem}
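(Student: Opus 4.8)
The plan is to reduce to the implication $(4)\Rightarrow(1)$, since $(1)\Rightarrow(2)\Rightarrow(3)\Rightarrow(4)$ is exactly Propositions~\ref{prop affpq} and~\ref{prop pqspec} together with the obvious $(3)\Rightarrow(4)$. So assume $A$ and $B$ have residue sets $\setA=\{a_1,\dots,a_k\}_n$ and $\setB=\{b_1,\dots,b_k\}_n$, and that $AA^T$ and $BB^T$ are similar, hence have the same multiset of eigenvalues. Applying Lemma~\ref{lcm trick} to the multisets $\{\zeta^{a_i-a_j}:1\le i,j\le k\}$ and $\{\zeta^{b_i-b_j}:1\le i,j\le k\}$ (whose associated sums $\sum_\phi\phi^r$, $1\le r\le n$, form the eigenvalue multiset of $AA^T$, resp.\ $BB^T$), the same quotient reduction used in the proof of Theorem~\ref{k=3} lets us assume that $\gcd(n,\{a_i-a_j\})=\gcd(n,\{b_i-b_j\})=1$; equivalently, the pairwise differences of the $a_i$ (and of the $b_i$) generate $\Z/n\Z$. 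Since every prime factor of $n$ exceeds $2k(k-1)$, the hypotheses of Lemma~\ref{lem 4} hold ($n$ is odd for $k=4$ and coprime to $10$ for $k=5$), so $\SDA(n,k)$ is true; hence it suffices to produce $s\in\Z/n\Z$ with $\Delta(\setA)=s\,\Delta(\setB)$, i.e.\ with $\{\zeta^{a_i-a_j}:i\ne j\}=\{\zeta^{(b_i-b_j)s}:i\ne j\}$ as multisets of $n\tth$ roots of unity. (Such an $s$ is automatically a unit: since $\Delta(\setA)$ and $\Delta(\setB)$ each generate $\Z/n\Z$ by the previous reduction, the equality $\Delta(\setA)=s\,\Delta(\setB)$ forces $s\cdot\Z/n\Z=\Z/n\Z$.)

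To find $s$, compare the eigenvalue $\sum_{1\le i,j\le k}\zeta^{a_i-a_j}$ of $AA^T$ (at index $r=1$) with the eigenvalues of $BB^T$: it equals $\sum_{1\le i,j\le k}\zeta^{(b_i-b_j)s}$ for some $s\in\{0,1,\dots,n-1\}$. Removing the $k$ diagonal terms from both sides yields the vanishing sum
\[
\sum_{i\ne j}\zeta^{a_i-a_j}\;-\;\sum_{i\ne j}\zeta^{(b_i-b_j)s}\;=\;0
\]
of at most $2k(k-1)$ summands. Now cancel the roots of unity common to the multisets $M_A:=\{\zeta^{a_i-a_j}:i\ne j\}$ and $M_B:=\{\zeta^{(b_i-b_j)s}:i\ne j\}$, obtaining a vanishing sum $\mathfrak{C}$ of weight at most $2k(k-1)$ in which no root of unity occurs both with coefficient $+1$ (inherited from $M_A$) and with coefficient $-1$ (inherited from $M_B$). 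I claim $\mathfrak{C}$ is empty; this gives $M_A=M_B$, hence the desired $s$, hence the theorem.

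Suppose instead that $\mathfrak{C}\ne\emptyset$, and choose a nonempty minimal vanishing subsum $W$ of $\mathfrak{C}$, of some weight $d_W\le 2k(k-1)$. Dividing $W$ by one of its summands $\mu$ produces a minimal vanishing sum $\mu^{-1}W$ containing $1$, so by Lemma~\ref{mann} the least common multiple of the orders of its roots of unity divides $\prod_{p\le d_W}p$. On the other hand, every summand of $\mathfrak{C}$ is $\pm$(an $n\tth$ root of unity), hence — as $n$ is odd — has order dividing $2n$, so that least common multiple also divides $2n$. Since every prime factor of $n$ exceeds $2k(k-1)\ge d_W$, it follows that the least common multiple divides $2$, i.e.\ $\mu^{-1}W$ consists only of $1$'s and $-1$'s; by minimality, $W=\{\mu,-\mu\}$. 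But $\mu$ and $-\mu$ cannot both be inherited from $M_A$, since that would make $-1$ an $n\tth$ root of unity with $n$ odd; likewise they cannot both come from $M_B$; and if one comes from each, then some $\zeta^{a_i-a_j}$ equals some $\zeta^{(b_{i'}-b_{j'})s}$, contradicting the way $\mathfrak{C}$ was formed. This contradiction proves $\mathfrak{C}=\emptyset$, completing the proof.

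I expect the only real obstacle to be the role of the prime $2$. Mann's bound (Lemma~\ref{mann}) governs minimal vanishing sums of $n\tth$ roots of unity, but here the relevant roots include the negatives of $n\tth$ roots of unity, so the pertinent modulus is $2n$ rather than $n$; the large-prime hypothesis annihilates the odd part but leaves the weight-two possibility $\mu+(-\mu)$, which must then be excluded by hand using the disjointness built into $\mathfrak{C}$ and the oddness of $n$. The remaining ingredients — the opening reduction to $\gcd(n,\{a_i-a_j\})=1$ and the appeal to $\SDA(n,k)$ — are, respectively, the same quotient trick as in the $k=3$ argument and a result (Lemma~\ref{lem 4}) we are entitled to use as a black box.
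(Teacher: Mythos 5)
Your proof is correct and follows essentially the same approach as the paper's: reduce to $(4)\Rightarrow(1)$, extract from a shared eigenvalue a vanishing sum of at most $2k(k-1)$ roots of unity of order dividing $2n$, invoke Lemma~\ref{mann} together with the large-prime hypothesis to force every minimal subsum to be a pair $(\alpha,-\alpha)$, rule out such pairs using the oddness of $n$ to deduce $\Delta(\setA)=u\,\Delta(\setB)$, and finish with Lemma~\ref{lem 4}. The only (cosmetic) departure is the order of operations around Lemma~\ref{lcm trick}: you perform the gcd-equals-one reduction at the outset and then observe that $s$ is automatically a unit because the two difference sets generate $\Z/n\Z$, whereas the paper first obtains the multiset equality $\{\zeta^{a_i-a_j}\}=\{\zeta^{(b_i-b_j)u}\}$ and then cites Lemma~\ref{lcm trick} to adjust $u$ to be coprime to $n$; the two routes are interchangeable and yield the same conclusion.
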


\begin{proof}
Suppose $k$ and $n$ satisfy the hypotheses.
If the circulants corresponding to
$\setA:=\{a_1,\dots,a_k\}_n$ and $\setB:=\{b_1,\dots,b_k\}_n$
have similar autocorrelation matrices, then as in the previous
section by considering eigenvalues we find a vanishing sum $S$ of
$2k(k-1)$ $(2n)\tth$ roots of unity.  By Lemma~\ref{mann}, since the prime
factors of $n$ are larger than $2k(k-1)$, every minimal vanishing
subsum of $S$ must be a pair $(\alpha,-\alpha)$.
Since $n$ is odd, no two $n\tth$ roots of unity are negatives of
one another, so we must have an equality of multisets
\[ \{ \zeta^{a_i-a_j}:1\le i,j\le k \} =
 \{ \zeta^{(b_i-b_j)u}:1\le i,j\le k\} \]
for some integer $u$, where $\zeta$ is a primitive $n\tth$ root of unity.
By Lemma \ref{lcm trick}, we may assume $\gcd(u,n)=1$.
Thus, the multisets of differences $\Delta(\setA)$ and $\Delta(\setB)$
are linearly equivalent, so Lemma~\ref{lem 4} implies $\setA$ and $\setB$
are affinely equivalent.
\end{proof}

%#######################################################################
%#######################################################################

\section{The situation for $k>5$}
\label{sec 6}

When $k\ge 6$, there are examples showing that our spectral
approach will not work, even if the prime factors of $n$ are large.
More precisely, for every $k\ge 6$ and every $n>2k+10$,
we will exhibit two $n$-by-$n$ weight-$k$
\zo circulants which are not \pq\ equivalent but yet
have the same autocorrelation matrices.  We emphasize that this
shows one cannot relate \pq\ equivalence to affine equivalence by
means of autocorrelation similarity; but it may still be true
for $k\ge 6$ that \pq\ equivalence and affine equivalence are related
for some other reason.

For $k\ge 6$,
consider the sets of integers $\setA$ and $\setB$ defined as
the complements in $\{0,1,2,\dots,k+5\}$ of the sets
$\{1,2,4,6,k+1,k+2\}$ and $\{1,3,6,k+1,k+2,k+3\}$, respectively.
In other words,
\[\setA=\{0,3,5,k+3,k+4,k+5\}\cup\{7,8,9,\dots,k\}\] and
\[\setB=\{0,2,4,5,k+4,k+5\}\cup\{7,8,9,\dots,k\}.\]  We will show that
$\setA$ and $\setB$ have the same multisets of differences, i.e.,
$\Delta(\setA)=\Delta(\setB)$, but that for any $n>2k+10$ they
define $n$-by-$n$ \zo circulants which are not \pq\ equivalent.

\begin{proposition}
If $k\ge 6$ then $\Delta(\setA)=\Delta(\setB)$.
\end{proposition}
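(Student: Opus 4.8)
The plan is to verify directly that the two multisets of differences coincide, by exploiting the fact that $\setA$ and $\setB$ are complements, inside the interval $I:=\{0,1,\dots,k+5\}$, of two small ``deleted'' sets $E_A:=\{1,2,4,6,k+1,k+2\}$ and $E_B:=\{1,3,6,k+1,k+2,k+3\}$, each of size $6$. First I would record that for any subset $S\subseteq I$ with $\{0,k+5\}\subseteq S$, the multiset $\Delta(S)$ (of ordered differences, with the convention in the paper — equivalently the unordered differences counted with multiplicity, together with the appropriate copies of $0$) is determined by the ``difference profile'' $f_S(t):=\#\{(x,y)\in S\times S: x-y=t\}$ for $t\in\{-(k+5),\dots,k+5\}$. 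Since $\setA$ and $\setB$ both have the same size $k$ (the complements of $6$-element subsets of a $(k+6)$-element set) and both contain $0$ and $k+5$, it suffices to show $f_{\setA}(t)=f_{\setB}(t)$ for all $t>0$, because $f_S(-t)=f_S(t)$ always and $f_S(0)=\#S=k$ in both cases.

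Next I would use inclusion–exclusion on the complement. Writing $C:=\chi_I$ for the indicator of the full interval and $g_S(t):=\#\{(x,y)\in I\times I: x-y=t,\ x,y\notin E_S\}$, we have
\[
f_{\setA}(t)-f_{\setB}(t)=g_{\setA}(t)-g_{\setB}(t),
\]
and expanding ``$x,y\notin E_S$'' as $(\text{all pairs in }I)-(\text{pairs with }x\in E_S)-(\text{pairs with }y\in E_S)+(\text{pairs with both in }E_S)$, the ``all pairs in $I$'' term cancels between $\setA$ and $\setB$. Thus I need only compare, for the two deleted sets $E_A,E_B$: (i) the number of $x\in E_S$ with $x-t\in I$ plus the number of $y\in E_S$ with $y+t\in I$ — this is a sum over the single set $E_S$ of how far each of its elements sits from the two ends of $I$, and since $E_A$ and $E_B$ are both symmetric images of each other under $x\mapsto (k+5)-x$ (indeed $E_B = (k+5)-E_A$ as one checks element by element: $k+5-\{1,2,4,6,k+1,k+2\}=\{k+4,k+3,k+1,k-1,4,3\}$ — hmm, this needs the actual pairing, so let me instead just say one verifies these two ``boundary'' contributions agree termwise after the substitution $t\mapsto -t$, which is harmless); and (ii) the self-correlation $\Delta(E_S)$ of the deleted set itself, i.e. the profile $h_{E_S}(t):=\#\{(x,y)\in E_S\times E_S: x-y=t\}$.

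So the crux reduces to two finite checks, both independent of $n$ and (after pulling out the $k$-dependence) essentially independent of $k$: first, that $\Delta(E_A)=\Delta(E_B)$ as multisets of differences of $6$-element sets — here $E_A=\{1,2,4,6,k+1,k+2\}$ and $E_B=\{1,3,6,k+1,k+2,k+3\}$, and one simply lists the $\binom{6}{2}$ positive differences of each (the differences among $\{1,2,4,6\}$ vs.\ among $\{1,3,6\}$, the differences among $\{k+1,k+2\}$ vs.\ among $\{k+1,k+2,k+3\}$, and the cross-differences of the form ``large minus small''), and confirms the two multisets match — this is where the hypothesis $k\ge 6$ enters, guaranteeing the ``small'' block $\{1,2,4,6\}$ (resp.\ $\{1,3,6\}$) and the ``large'' block near $k$ do not overlap or interact with the ends of $I$ in an unexpected way, so that all the listed differences are genuinely distinct from the boundary effects; second, the boundary contribution in (i), which amounts to checking that the multiset $\{x, (k+5)-x : x\in E_A\}$ equals $\{y,(k+5)-y: y\in E_B\}$, i.e.\ that $E_A\cup((k+5)-E_A)=E_B\cup((k+5)-E_B)$ as multisets — again a direct finite verification. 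Assembling these, $f_{\setA}=f_{\setB}$, hence $\Delta(\setA)=\Delta(\setB)$.

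The main obstacle is purely bookkeeping: making sure the interval $I$ is large enough (which is exactly why $k\ge 6$ is assumed, rather than $k\ge 2$) that the ``small'' cluster $\{0,3,5\}$-side, the ``middle run'' $\{7,\dots,k\}$, and the ``large'' cluster $\{k+3,k+4,k+5\}$-side of $\setA$ (and the analogous pieces of $\setB$) stay disjoint, so that the difference multiset really does decompose cleanly into within-cluster, cross-cluster, and run-related parts that can be compared block by block; once the regime $k\ge 6$ is fixed, the comparison is the same finite computation for every $k$, with the parameter $k$ appearing only additively.
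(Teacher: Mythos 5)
Your inclusion--exclusion framework is structurally sound: writing $\setA = I\setminus E_A$ and $\setB = I\setminus E_B$ with $I=\{0,\dots,k+5\}$, the profile $f_S(t)$ decomposes as (all pairs in $I$) $-$ (boundary terms) $+$ $h_{E_S}(t)$, and the first piece is common to both. But the two ``finite checks'' you then isolate as the crux are each \emph{false}, and this is a genuine gap.

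For the internal check, $\Delta(E_A)\ne\Delta(E_B)$. Listing positive differences: $E_A=\{1,2,4,6,k+1,k+2\}$ gives $\{1,1,2,2,3,4,5\}\cup\{k-5,k-4,k-3,k-2,k-1,k,k,k+1\}$, while $E_B=\{1,3,6,k+1,k+2,k+3\}$ gives $\{1,1,2,2,3,5\}\cup\{k-5,k-4,k-3,k-2,k-1,k,k,k+1,k+2\}$. The multiset $\Delta(E_A)^+$ contains an extra $4$ that $\Delta(E_B)^+$ lacks, and $\Delta(E_B)^+$ contains an extra $k+2$. For the boundary check, $E_A\cup\bigl((k+5)-E_A\bigr)=\{1,2,3,4,4,6,k-1,k+1,k+1,k+2,k+3,k+4\}$ while $E_B\cup\bigl((k+5)-E_B\bigr)=\{1,2,3,3,4,6,k-1,k+1,k+2,k+2,k+3,k+4\}$, which again differ (the first has $4,k{+}1$ doubled, the second has $3,k{+}2$ doubled). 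So neither the ``self-correlation'' piece nor the ``boundary'' piece matches by itself.

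What is true is that the \emph{combination} $-(\text{boundary term}) + h_{E_S}(t)$ matches for all $t$: the discrepancy of $\pm 1$ at $t=4$ and $t=k+2$ in the boundary term is exactly canceled by the discrepancy of $\mp 1$ in $h_{E_S}$ at those same values. Your proof would be repaired by verifying that combined identity rather than the two pieces separately. For comparison, the paper avoids this entirely by factoring out $\setA\cap\setB$ rather than using the complement in $I$: since $\setA\setminus\setB=\{3,k+3\}$ and $\setB\setminus\setA=\{2,4\}$ each have only two elements, the relevant computation involves differences against just two ``new'' points per set, which is a considerably smaller and less error-prone check than working with the $6$-element deleted sets $E_A,E_B$.
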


\begin{proof}
We compute $\Delta(\setA)\setminus\Delta(\setA\cap\setB)$.
Since $\setA\setminus(\setA\cap\setB)=\{3,k+3\}$, this consists
of all differences between two elements of $\setA$ which
involve $3$ or $k+3$.  Thus, this is
the multiset of elements $\pm i$ with $i$ in the union of the two multisets
$\{-3,0,2,k,k+1,k+2,4,5,6,\dots,k-3\}$ and
$\{-k-3,-k+2,0,1,2,4-k,5-k,6-k,\dots,-3\}$.
The corresponding multiset for $\setB$ is the union of
$\{-2,0,2,3,k+2,k+3,5,6,7,\dots,k-2\}$
and $\{-4,0,1,k,k+1,3,4,5,\dots,k-4\}$.
Thus $\Delta(\setA)\setminus\Delta(\setA\cap\setB)=
\Delta(\setB)\setminus\Delta(\setA\cap\setB)$, so $\Delta(\setA)=\Delta(\setB)$.
\end{proof}

Let $\setA_n$ and $\setB_n$ be the images of $\setA$ and $\setB$
in $\Z/n\Z$, and let $A_n$ and $B_n$ be the corresponding $n$-by-$n$
circulants.

\begin{proposition}
If $k\ge 6$ and $n>2k+10$, then $A_n$ and $B_n$ are not \pq\ equivalent.
\end{proposition}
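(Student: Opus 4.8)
The plan is to use the hypothesis $n>2k+10$ in two complementary ways: first, it guarantees that the equality $\Delta(\setA)=\Delta(\setB)$ survives reduction modulo $n$, so that the autocorrelation matrices become literally \emph{equal}; and second, it forces $\setA_n$ and $\setB_n$ to lie in an arc of length less than $n/2$, which rigidifies the situation enough to conclude. Concretely, a putative \pq\ equivalence $A_n=PB_nQ$ will be shown to force $P$ and $Q$ to be ``affine'' permutations, whence $\setA_n$ and $\setB_n$ would be affinely equivalent, contradicting the fact that these specific sets are not affinely related.

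\emph{Reduction to permutations commuting with the autocorrelation.} Every element of $\Delta(\setA)$ has absolute value at most $k+5$, and since $2(k+5)<n$, distinct elements stay distinct mod~$n$; hence $\Delta(\setA_n)=\Delta(\setB_n)$ as multisets in $\Z/n\Z$, and so $M:=A_nA_n^T=B_nB_n^T$. Suppose $A_n=PB_nQ$. By Proposition~\ref{prop pqspec}, $M=PMP^{-1}$; and since circulants are normal we have $A_n^TA_n=M=B_n^TB_n$, so the identity $A_n^TA_n=Q^TB_n^TB_nQ$ gives $M=QMQ^{-1}$. Thus both $P$ and $Q$ lie in the group $G$ of permutation matrices commuting with $M$. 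Moreover $A_n$, being a circulant, commutes with the shift $S$, and substituting $A_n=PB_nQ$ and rearranging yields $B_n\,(QSQ^{-1})=(P^{-1}SP)\,B_n$; in particular $\rho:=P^{-1}SP$ lies in $G$ (as $P\in G$ and the circulant $S\in G$), and $\rho$ is a single $n$-cycle, being conjugate to $S$.

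\emph{The crux: $G$ is dihedral.} I would show that for $n>2k+10$ the group $G$ equals the dihedral group $\langle S,T\rangle$, where $T\colon e_i\mapsto e_{-i}$ is the reversal. The inclusion $\langle S,T\rangle\subseteq G$ is immediate, since circulants commute with $M$ and $TMT=M^T=M$ because $\Delta(\setA)$ is symmetric. For the reverse inclusion, $G$ is the automorphism group of the weighted Cayley graph on $\Z/n\Z$ whose edge $\{i,j\}$ carries weight $\nu_\setA(i-j)$, the multiplicity of $i-j$ in $\Delta(\setA)$, so $G$ preserves every level graph $\Gamma_w=\{\{i,j\}:\nu_\setA(i-j)=w\}$ as well as the underlying graph $\Gamma$ with connection set $(\setA-\setA)\setminus\{0\}$. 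The key feature of these sets is that they sit in a short arc, so $\Gamma$ (and suitable $\Gamma_w$) has a short connection set relative to $n$; for $n$ comfortably larger than $2(k+5)$ one checks $\setA-\setA$ is the full interval $\{-(k+5),\dots,k+5\}$, so $\Gamma=C_n^{\,k+5}$, whose automorphism group is $\langle S,T\rangle$, and for the remaining small values of $n$ one instead uses a level graph $\Gamma_w$ whose connection set is a proper short subset of $\Z/n\Z$ generating it (e.g.\ one with $\nu_\setA$ taking a value uniquely on $\{1,\dots,k+5\}$, giving a single $n$-cycle) to force only rotations and reflections. \textbf{This bookkeeping --- making the bound on $G$ uniform for all $n>2k+10$, including the borderline values just above $2k+10$ where wrap-around creates short cycles --- is the step I expect to be the main obstacle}, and it is exactly where the short-arc consequence of $n>2k+10$ is essential.

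\emph{Conclusion.} Granting $G=\langle S,T\rangle$: the only $n$-cycles in $\langle S,T\rangle$ are the rotations $S^j$ with $\gcd(j,n)=1$ (the reflections are involutions), so $\rho=P^{-1}SP=S^j$, and an elementary computation then shows that the permutation underlying $P$ is affine, of the form $x\mapsto j^{-1}x+c$; applying the same reasoning to $\sigma:=QSQ^{-1}\in G$ shows $Q$ is affine. Writing out $A_n=PB_nQ$ with $P$ and $Q$ affine forces $\setA_n=s\,\setB_n+v$ for some unit $s$, i.e.\ $\setA_n$ and $\setB_n$ are affinely equivalent. Then $s\,\Delta(\setB_n)=\Delta(\setA_n)=\Delta(\setB_n)$; since $\Delta(\setB_n)$ is supported on the short arc $(-(k+5),k+5)$ and contains $1$, comparing the multiplicities of $s,2s,\dots$ with those of $1,2,\dots$ forces $s=\pm1$. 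Finally a direct inspection of the explicit sets shows $\setA$ is not of the form $\pm\setB+v$: for $\setB+v$ the unique minimal covering arcs force $v=0$ and then $\setA\neq\setB$, while for $-\setB+v$ they force $v=k+5$, and $-\setB+(k+5)=\{0,1\}\cup\{5,\dots,k-2\}\cup\{k,k+1,k+3,k+5\}$ contains $1$ but not $3$, whereas $\setA$ contains $3$ but not $1$. This contradiction completes the proof.
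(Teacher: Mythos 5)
Your overall plan is an ambitious one: reduce the problem to showing that the group $G$ of permutation matrices commuting with $M=A_nA_n^T=B_nB_n^T$ is the dihedral group $\langle S,T\rangle$, then conclude that any \pq\ equivalence is affine with multiplier $\pm 1$, and finally check by hand that $\setA\ne\pm\setB+v$. The reductions at both ends are sound: the computation that $P,Q\in G$ and that $G=\langle S,T\rangle$ forces $\setA_n=\pm\setB_n+v$ is correct, and the explicit check that $\setA$ is not a $\pm$-translate of $\setB$ is also correct. But the proof has a genuine gap precisely at the step you yourself flag as the ``main obstacle'': establishing $G=\langle S,T\rangle$ for every $n>2k+10$. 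This is not a bookkeeping issue; it is the entire content of the argument, and it is left unproven. For $n=2k+11$ the connection set $(\setA-\setA)\setminus\{0\}$ is all of $\Z/n\Z\setminus\{0\}$, so the underlying graph $\Gamma$ is complete and $\operatorname{Aut}(\Gamma)=S_n$; for $n=2k+12$ it is a cocktail-party graph with automorphism group $S_2\wr S_{n/2}$. So the claim ``$\Gamma=C_n^{k+5}$, whose automorphism group is $\langle S,T\rangle$'' is simply false for those $n$, and you would need the weighted structure. The proposed fix---pass to a level graph $\Gamma_w$ whose connection set is ``a proper short subset generating it''---is only a sketch: you have not identified which $w$ works for each $k$, nor proved that the resulting circulant graph has dihedral automorphism group (circulant graphs with short connection sets can still have automorphisms outside $D_n$; one needs the connection set to be generic in a suitable sense, and this must be verified for each $k$). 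Even away from the borderline $n$, the claim $\operatorname{Aut}(C_n^{m})=D_n$ for $2\le m<n/2$ requires a citation or proof and fails for $n=2m+2$. In short: as written, the argument proves nothing until the crux lemma is supplied, and supplying it correctly for all $n>2k+10$ is a substantial project in its own right.

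By contrast, the paper sidesteps the automorphism group entirely. It fixes the top row of $A_n$, collects the rows whose dot-product with it is $1$ (there are exactly ten, corresponding to multiplicity-one differences) and those with dot-product $2$, takes the componentwise product of the top row with the sum of those rows, and observes that the resulting multiset of entries is an invariant of \pq\ equivalence classes that differs between $A_n$ and $B_n$ (for $k\ge 9$ the vector $\mathbf a$ has an entry $4$ which $\mathbf b$ lacks; $6\le k\le 8$ are handled by the same idea with a small modification). The bound $n>2k+10$ enters only to ensure that these index calculations do not wrap around mod~$n$. This is a five-line combinatorial argument that is uniform in $k$ and $n$, avoids any appeal to circulant-graph automorphism classification, and does not rely on the particular form of $\setA-\setA$. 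Your strategy, if the dihedral claim could be pushed through, would give more (it would show \emph{any} pair with equal autocorrelations and no $\pm$-translate relation is a counterexample), but that extra generality is exactly what makes the missing lemma hard, and you have not closed it.
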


\begin{proof}
First assume $k\ge 9$.  The top row of $A_n$ has dot-product $1$ with
precisely ten rows of $A_n$, namely the shifts of the top row by
$k+1,\dots,k+5$ in either direction.  The bound $n>2k+10$ ensures that
in computing these dot-products, it suffices to add or subtract an
integer from the indices: we need not reduce the indices mod~$n$.
Likewise, the top row of $A_n$ has dot-product $2$ with precisely two
rows of $A_n$, namely the shifts of the top row by $k-1$ in either
direction.  The componentwise product of the top row of $A_n$ with the sum of
these twelve rows is the vector
\[
{\bf a} := (4,0,0,2,0,1,0,0,\dots,0,1,0,0,0,1,3,2,0,0,\dots),\]
where the `4' occurs in position $0$ and the `3' occurs in position $k+4$.
The corresponding product for $B$ is
\[{\bf b} := (3,0,2,0,1,1,0,0,\dots,0,1,0,0,0,0,3,3,0,0,\dots),\]
where the entries `3' are in positions $0$, $k+4$ and $k+5$.
If $B_n$ were gotten by permuting the rows and columns of $A_n$, then
${\bf b}$ would be a permutation of ${\bf a}$.  But this is not the case,
since `4' is an entry in ${\bf a}$ but not in ${\bf b}$.

If $6\le k\le 8$ then the top row of $A_n$ has dot-product $2$ with
more than two rows of $A_n$, but its componentwise product with the sum of
all such rows is not a permutation of the corresponding product for $B_n$.
This proves the result in every case.
\end{proof}

%#######################################################################
%#######################################################################

\section{Relationship with the \Adam\ problem}
\label{sec adam}

\theoremstyle{definition}
\newtheorem*{AP}{\'Ad\'am Problem}
\newtheorem*{BAP}{Bipartite \'Ad\'am Problem}

A much-studied problem is the

\begin{AP}
For which $n$ do there exist $n$-by-$n$ \zo circulants
$A$ and $B$ which are not linearly equivalent but for which $B=PAP^{-1}$
for some permutation matrix $P$?
\end{AP}

Actually, \Adam\  made the conjecture that this \ppinv\ equivalence
was always the same as linear equivalence, but a counterexample was
published soon thereafter.
This led many authors to seek ways to weaken the original conjecture to
make it true.

This problem resembles the problem studied in this paper, which we now
rename:

\begin{BAP}
Describe the \zo circulants $A$ and $B$
which are not affinely equivalent but for which $B=PAQ$ for
some permutation matrices $P$ and $Q$.
\end{BAP}

\Adam\  was interested in isomorphisms between directed graphs that
had prescribed vertex transitive symmetry groups, in this case the
cyclic group.
Our problem asks the same question for directed bipartite graphs
which have the same group acting on each part.

Muzychuk has given a magnificent solution to the original
\Adam\ problem~\cite{muzychuk}:
the answer is all $n$ which are divisible by either $8$ or by the square of
an odd prime.  His proof uses detailed considerations of Schur algebras, among
other things.  As far as we know, the $n$'s occurring in solutions to the
bipartite \Adam\  problem might be precisely the $n$'s which solve the
\Adam\ problem; however, it seems that Muzychuk's method does not apply to
the bipartite problem.  We note, however, that Babai's group-theoretic
proof of the \Adam\ conjecture in the case of prime $n$ can be extended
(with some effort) to the bipartite situation.

There is evidence that these are different problems.
The first counterexample to the \Adam\  conjecture (from \cite{ET}) was the
pair of circulants $\{ 1, 2, 5 \}_8$ and $\{ 1, 5, 6 \}_8$.
On the other hand, Theorem~\ref{thm intro} shows that there
are no weight three ``counterexamples''
(permutation equivalent but not affinely equivalent)
for the bipartite \Adam\  problem.

However, there is often a connection between counterexamples
in the two problems.
Our original counterexample of circulants that are \pq\ but not affinely
equivalent was $\{ 0,1,4,7 \}_8$ and $\{ 0,1,3,4 \}_8$.
These two circulants do not directly form a counterexample for the
\Adam\ problem because they are not \ppinv\ equivalent.
But the affine equivalence class of $\{0,1,4,7\}_8$ includes
$\{0,1,2,5\}_8$, which is \ppinv\ equivalent to $\{0,1,5,6\}_8$, which in
turn is affinely equivalent to $\{0,1,3,4\}_8$.  Thus, by picking different
members of the two affine equivalence classes, we can turn our
bipartite \Adam\ counterexample into an \Adam\ counterexample.

We did some computer searches to find bipartite \Adam\ counterexamples,
and most of the examples we found were affinely equivalent to \Adam\
counterexamples.  However, in weight six there are bipartite
\Adam\  counterexamples like
$\{ 0 , 1, 2, 5, 8, 10 \}_{16}$ and $\{ 0, 2, 3, 7, 8, 10 \}_{16}$
which are not affinely equivalent to \Adam\  counterexamples.
We do not know how rare such examples will be for larger weights.

%#######################################################################
%#######################################################################

\section{Acknowledgements and Future Directions}
\label{sec ack}

It is a pleasure to thank Bradley Brock for helpful discussions
on vanishing sums of roots of unity, and Marshall W. Buck for
help with equation solving.

There are several different directions for future research depending on
what equivalence relations and what parameter ranges are of the most interest.
For larger densites, say $ k \approx n/2 $,
it may still be true that the autocorrelation spectra usually determine
the affine equivalence classes.

In some applications it is natural to study sparse circulants whose nonzero
entries can be $-1$ as well as $1$.
Also, the notion of spectral equivalence could be relaxed to simply
having the same mimimal polynomial or having the same characteristic
polynomial modulo some fixed prime.

%#######################################################################
%#######################################################################
%#######################################################################


\begin{thebibliography}{9}
\newcommand{\au}[1]{{#1},}
\newcommand{\ti}[1]{\textit{#1},}
\newcommand{\jo}[1]{{#1}}
\newcommand{\vo}[1]{\textbf{#1}}
\newcommand{\yr}[1]{(#1),}
\newcommand{\pp}[1]{#1.}
\newcommand{\pps}[1]{#1;}
\newcommand{\bk}[1]{{#1},}
\newcommand{\inbk}[1]{in: {#1}}
\newcommand{\xxx}[1]{{arXiv:#1}}

\bibitem{AP}
\au{B. Alspach and T.~D. Parsons}
\ti{Isomorphism of circulant graphs and digraphs}
\jo{Discrete Math.}
\vo{25}
\yr{1979}
\pp{97--108}

\bibitem{babai}
\au{L. Babai}
\ti{Isomorphism problem for a class of point-symmetric structures}
\jo{Acta Math. Ada. Sci. Hung.}
\vo{29}
\yr{1977}
\pp{329--336}

\bibitem{Davis}
\au{P.~J. Davis}
\bk{Circulant Matrices}
2nd ed., Chelsea, 1994.

\bibitem{D}
\au{D.~\'Z. Djokovi\'c}
\ti{Isomorphism problem for a special class of graphs}
\jo{Acta Math. Acad. Sci. Hung.}
\vo{21}
\yr{1970}
\pp{267--270}

\bibitem{ET}
\au{B. Elspas and J. Turner}
\ti{Graphs with circulant adjacency matrices}
\jo{J. Comb. Th.}
\vo{9}
\yr{1970}
\pp{297--307}

\bibitem{HC}
\au{Q. Huang and A. Chang}
\ti{Circulant digraphs determined by their spectra}
\jo{Discrete Math.}
\vo{240}
\yr{2001}
\pp{261--270}

\bibitem{mann}
\au{H.~B. Mann}
\ti{On linear relations between roots of unity}
\jo{Mathematika}
\vo{12}
\yr{1965}
\pp{107--117}

\bibitem{MPS}
\au{B. Mans, F. Pappalardi and I. Shparlinski}
\ti{On the spectral \Adam\ property for circulant graphs}
\jo{Discrete Math.}
\vo{254}
\yr{2002}
\pp{309--329}

\bibitem{muzychuk}
\au{M. Muzychuk}
\ti{A solution of the isomorphism problem for circulant graphs}
\jo{Proc. London Math. Soc. (3)}
\vo{88}
\yr{2004}
\pp{1--41}

\bibitem{palfy}
\au{P.~P. P\'alfy}
\ti{Isomorphism problem for relational structures with a cyclic automorphism}
\jo{Europ. J. Combinatorics}
\vo{8}
\yr{1987}
\pp{35--43}

\bibitem{poonen_rubin}
\au{B. Poonen and M. Rubinstein}
\ti{The number of intersections of the diagonals of a regular polygon}
\jo{SIAM J. Discrete Math.}
\vo{11}
\yr{1998}
\pp{135--156}

\end{thebibliography}
\end{document}